\def\N{\mathbb N}
\def\A{\mathcal A}
\def\C{\mathcal C}
\def\pf{\begin{proof}}
\def\pfk{\end{proof}}
\def\N{\mathbb N}
\def\A{\mathcal A}
\def\C{\mathcal C}
\def\B{{\mathcal B}}
\def\Pal{{\mathrm{Pal}}}
\def\Pc{{\mathcal P}}
\def\buu{{\mathbf u}}
\def\bff{{\mathbf f}}
\def\btt{{\mathbf t}}
\def\bvv{{\mathbf v}}
\def\bww{{\mathbf w}}
\renewcommand{\L}[1][]{\mathcal{L}_{#1}(\mathbf{u})}
\def \L {\mathcal{L}}
\def \Lu {\L(\buu)}
\def \Rk#1 {$\mathcal{R}_{#1}$}
\DeclareMathOperator{\lpp}{lpp}
\DeclareMathOperator{\lps}{lps}
\def \FC#1 {
\mathcal{C}
\ifthenelse{\equal{#1}{}}{}{(#1)}
}
\def \PC#1 {
\mathcal{P}
\ifthenelse{\equal{#1}{}}{}{(#1)}
}
\begin{document}
\title{On Words with the Zero Palindromic Defect}
\titlerunning{On Words with the Zero Palindromic Defect}
\toctitle{On Words with the Zero Palindromic Defect}

\author{Edita Pelantov\'a\inst{1} \and \v St\v ep\'an Starosta\inst{2}}
\authorrunning{E. Pelantov\'a and \v St\v ep\'an Starosta}
\tocauthor{Edita~Pelantov\'a\ and \v St\v ep\'an~Starosta}

\institute{Department of Mathematics, Faculty of Nuclear Sciences and Physical Engineering, Czech Technical University in Prague, Czech Republic\\ \email{edita.pelantova@fjfi.cvut.cz} \and Department of Applied Mathematics,Faculty of Information Technology, Czech Technical University in Prague, Czech Republic\\ \email{stepan.starosta@fit.cvut.cz}}

\maketitle
\setcounter{footnote}{0}

\begin{abstract}
We study the set of finite  words with zero palindromic defect, i.e., words rich in palindromes.
This set is factorial, but not recurrent.
We focus on description of pairs of  rich words which cannot occur simultaneously as factors of a longer rich word.
\keywords{palindrome, palindromic defect, rich words}
\end{abstract}

\section{Introduction}

In \cite{DrJuPi},  Droubay, Justin and Pirillo  observed  that the  number of distinct palindromes occurring in a  finite word $w$  of length $n$  does not exceed $n+1$.  This upper bound motivated Brlek, Hamel,  Nivat,  and Reutenauer to define in \cite{BrHaNiRe} the notion \textit{palindromic defect}  $D(w)$ of a finite word  $w$ as the difference of the upper bound $n+1$ and the actual number of palindromic factors   occurring in  $w$. One can say that the  palindromic defect  measures the number of ``missing'' palindromic factors in the given word.
A word with zero palindromic defect is usually shortly called {\em rich} or {\em full}.

For an infinite word $\bf{u}$  the palindromic defect   $D(\bf{u})$  is naturally defined as the supremum of the set $\{D(w) \colon w \text{ is a factor of } \bf{u}\} $. Many classes of  words with the defect zero have been found, for example  Sturmian words, words coding symmetrical interval exchange and complementary symmetric Rote words (see~\cite{BaMaPe,BlBrLaVu11,DrPi}).

Palindromic defect is actively studied in the last decade.
During these years many nice properties of  words with zero defect have been brought into light. Some  of them have been already proved, some of them are  formulated as conjectures and  are still open.    Neither the basic question ``What is the number of rich words of a given length?''  has been  answered.
This question is extremely  interesting  as  the set of rich words is a very naturally defined factorial language  which has  superpolynomial and subexponential  growth as was shown in \cite{GuShSh15}  by C. Guo, J. Shallit and A. M.
Shur  and in  \cite{Ru17} by J. Rukavicka, respectively.

This article consists of three parts.
In the first part, we present relevant known results.
In the last part we give a list of open questions connected to the palindromic defect  and we also recall a narrow connection to the well known conjecture of Hof, Knill, and Simon.
The middle part contains a new result.
It  is devoted to so-called  compatible  words, i.e.,  to the pairs of finite rich words which can occur simultaneously as factors of a longer rich word.
We believe  that our  result  may help to characterize  words $w$ with the following property:   $D(w) = 1$  and $D(u) =0$ for each proper factor $u$ of $w$.
A characterization of these words seems to be the  missing point in answering several open questions.

\section{Preliminaries}

\subsection{Basic notations and definitions}

Let $\A$ be a finite set, called an \textit{alphabet}.
Its elements are called \textit{letters}.
A \textit{finite word} $w$ is an element of $\A^n$ for $n \in \N$.
The \textit{length} of $w$ is $n$ and is denoted $|w|$.
The set of all finite words over $\A$ is denoted $\A^*$.
An \textit{infinite word} over $\A$ is an infinite sequence of letters from $\A$.

A finite word $w$ is a \textit{factor} of a finite or infinite word $v$ if there exist words $p$ and $s$ such that $v$ is a concatenation of $p$, $w$, and $s$, denoted $v = pws$.
The word $p$ is said to be a \textit{prefix} and $s$ a \textit{suffix} of $v$.
The set of all factors of a word $\buu$ is the \textit{language of ${\bf{u}}$} and is denoted $\Lu$.
All factors of $\buu$ of length $n$ are denoted by $\L_n(\buu)$.

An \textit{occurrence} of $w = w_0w_1 \cdots w_{n-1} \in \A^n$ in a word $v = v_0v_1v_2 \ldots$ is an index $i$ such that $v_i \cdots v_{i+n-1} = w$.
A factor $w$ is \textit{unioccurrent} in $v$ if there is exactly one occurrence of $w$ in $v$.
A \textit{complete return word} of a factor $w$ (in $v$) is a factor $f$ (of $v$) containing exactly two occurrences of $w$ such that $w$ is its prefix and also its suffix.
For instance, the word $010011010$ is a complete return word of $010$.

The \textit{reversal} or mirror mapping assigns to a word $w \in \A^*$ the word $\widetilde{w}$ with the letters reversed, i.e.,
\[
\widetilde{w} = w_{n-1}w_{n-2}\cdots w_1 w_0 \quad \text{ where } w = w_0w_1 \cdots w_{n-1} \in \A^n.
\]
A word is \textit{palindrome} if $w = \widetilde{w}$.
We say that a language $\L \subset \A^*$ is \textit{closed under reversal} if for all $w \in \L$ we have $\widetilde{w} \in \L$.

Given an infinite word $\buu$, its \textit{factor complexity} $\C_\buu(n)$ is the count of its factors of length $n$:
\[
\C_\buu(n) = \# \L_n(\buu) \quad \text{ for all } n \in \N.
\]
Let $\Pal(\buu)$ be the set of all palindromic factors of the infinite word $\buu$.
The \textit{palindromic complexity} $\Pc_\buu(n)$ of $\buu$ is given by
\[
\Pc_\buu(n) = \# ( \L_n(\buu) \cap \Pc(\buu) ) \quad \text{ for all } n \in \N.
\]
We omit the subscript $\buu$ if there is no confusion.

 \subsection{Fixed points of morphisms and their properties}

A \textit{morphism} $\varphi$ is a mapping $\A^* \to \B^*$ where $\A$ and $\B$ are alphabets such that for all $v,w \in \A^*$ we have $\varphi(vw) = \varphi(v)\varphi(w)$ (it is a homomorphism of the monoids $\A^*$ and $\B^*$).
Its action is extended to $\A^\N$: if $\buu = u_0u_1u_2 \ldots \in \A^\N$, then
\[
\varphi(\buu) = \varphi(u_0) \varphi(u_1) \varphi(u_2) \ldots \in \B^\N.
\]

If $\varphi$ is an endomorphism of $\A^*$, we may find its fixed point, i.e., a word $\buu$ such that $\varphi(\buu) = \buu$.
We are interested mainly in the case of $\buu$ being infinite.
A morphism $\varphi: \A^* \to \A^*$ is \textit{primitive} if there exists an integer $k$ such that for every $a,b \in \A$ the letter $b$ occurs in $\varphi^k(a)$.

Two morphisms $\varphi, \psi: \A^* \to \B^*$ are \textit{conjugate} if there exists a word $w \in \B^*$ such that
\[
\forall a \in \A, \varphi(a)w = w \psi(a) \quad  \text{ or } \quad \forall a \in \A, w\varphi(a) = \psi(a)w.
\]
If $\varphi$ is primitive, then the languages of fixed points of $\varphi$ and $\psi$ are the same.

A morphism $\psi: \A^* \to \B^*$ is of \emph{class $P$} if $\psi(a) = pp_a$ for all $a
\in \A$ where $p$ and $p_a$ are both palindromes (possibly empty).
A morphism $\varphi$ is of class $P'$ if it is conjugate to a morphism of class $P$.

The following examples illustrate the last few notions.

\begin{example}
Let $\varphi: \{a,b\}^* \to \{a,b\}^*$ be determined by $
\varphi: \begin{array}{rcl}
a & \mapsto & abab, \\
b & \mapsto & aab.
\end{array}
$
The fixed point of $\varphi$ is
\[
\buu = \lim_{k \to +\infty} \varphi^k(a) = \underbrace{abab}_{\varphi(a)} \underbrace{aab}_{\varphi(b)} \underbrace{abab}_{\varphi(a)} \underbrace{aab}_{\varphi(b)} \underbrace{abab}_{\varphi(a)} \ldots
\]
% As $\varphi$ is primitive, the word $\buu$ is uniformly recurrent.
The morphism $\varphi$ is of class $P'$ since it is conjugate to $\psi$ given by
$
\psi: \begin{array}{rcl}
a & \mapsto & abab, \\
b & \mapsto & aba.
\end{array}
$
Indeed, we have $ab \varphi(a) = \psi(a) ab$ and $ab \varphi(b) = \psi(b) ab $.
To see that $\psi$ is of class $P$, i.e., it is of the form $a \mapsto pp_a$ and $b \mapsto pp_b$, it suffices to set $p = aba$, $p_a = b$ and $p_b = \varepsilon$.
The fixed point of $\psi$ is
\[
\bvv = \lim_{k \to +\infty} \psi^k(a) = \underbrace{abab}_{\psi(a)} \underbrace{aba}_{\psi(b)} \underbrace{abab}_{\psi(a)} \underbrace{aba}_{\psi(b)} \underbrace{abab}_{\psi(a)} \ldots
\]
We have $\L(\buu) = \L(\bvv)$.

% Since $|\varphi(a)| \neq |\varphi(b)|$, the morphism $\varphi$ is not uniform.
\end{example}

\begin{example} \label{ex:TM_Fibo}
The two  famous examples of infinite words, the Thue--Morse word $\btt$ and the Fibonacci word $\bff$, are both fixed points of a morphism.

The word $\btt$ is fixed by the morphism $\varphi_{TM}$ determined by $\varphi_{TM}(0) = 01$ and $\varphi_{TM}(1) = 10$.
Note that this morphism in fact has two fixed points, one being the other one after replacing $0$ with $1$ and $1$ with $0$.
% The word $\btt$ as given above is the fixed points starting in $0$.

The word $\bff$ is fixed by the morphism $\varphi_F$ defined by $\varphi_F(0) = 01$ and $\varphi_F(1) = 0$.
\end{example}

An (infinite) fixed point of a morphism of class $P'$ clearly contains infinitely many palindromes which is one motivation for this notion.
Class $P$ is introduced in \cite{HoKnSi} in the context of discrete Schr\"{o}dinger operators.

\section{The study of palindromic defect}

\subsection{Characterizations of words with the zero  defect}

We start by giving some of the known characterizations of infinite rich words.

\begin{theorem} \label{thm:equiv_almost_rich}
For an infinite word $\buu$ with language closed under reversal
the following statements are equivalent:
\begin{enumerate}
%[itemsep=0pt]
\item  \label{equiv_arich_1} $D(\buu)$ is zero (\cite{BrHaNiRe});
\item \label{equiv_arich_psuffix} any prefix of $\buu$  has a unioccurrent longest palindromic suffix (\cite{DrJuPi});
%\item \label{equiv_arich_suffix}  any factor of $\buu$ longer has a unioccurrent longest palindromic suffix;
\item \label{equiv_arich_returnpalindromicky} for any palindromic factor $w$ of $\buu$, every complete return word of $w$ is a~palindrome (\cite{GlJuWiZa});
 \item \label{equiv_arich_returnobecny}  for any factor $w$ of $\buu$, every factor of $\buu$ that contains $w$ only as its prefix and $\widetilde{w}$ only as its suffix is a~palindrome (\cite{GlJuWiZa});
 \item \label{equiv_arich_nasa} for
each $n \in N$ we have $\C(n+1) - \C(n) + 2 =
\Pc(n) + \Pc (n+1)$ (\cite{BuLuGlZa}).
\end{enumerate}
\end{theorem}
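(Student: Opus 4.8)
The plan is to prove a cycle of implications, grouping the statements by the technique each requires. The cornerstone is the counting lemma of Droubay, Justin and Pirillo: appending a single letter $a$ to a finite word $v$ creates at most one new palindromic factor, namely the longest palindromic suffix (lps) of $va$, and it creates one exactly when this lps is unioccurrent in $va$. I would first prove this lemma: if $q$ is a palindromic suffix of $va$ strictly shorter than the lps $p$, then $q$ is a suffix and, since $p$ is a palindrome, also a prefix of $p$, and its prefix occurrence ends strictly before the last letter, so $q$ already occurs in $v$. Summing the increment over all nonempty prefixes gives $D(v) = \#\{\text{prefixes of } v \text{ whose lps is not unioccurrent}\}$, hence $D(v)=0$ iff every prefix of $v$ has a unioccurrent lps. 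Passing to $\buu$, I would use that every factor of $\buu$ is a suffix of some prefix (take the prefix ending at an occurrence) together with the heredity of defect zero under taking factors; these give (\ref{equiv_arich_1}) $\Leftrightarrow$ (\ref{equiv_arich_psuffix}).

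Next I would treat the return-word statements as a block, closing the loop (\ref{equiv_arich_psuffix}) $\Rightarrow$ (\ref{equiv_arich_returnobecny}) $\Rightarrow$ (\ref{equiv_arich_returnpalindromicky}) $\Rightarrow$ (\ref{equiv_arich_psuffix}). For (\ref{equiv_arich_returnpalindromicky}) $\Rightarrow$ (\ref{equiv_arich_psuffix}): if a prefix $p$ had an lps $q$ that is not unioccurrent, the two rightmost occurrences of $q$ in $p$ delimit a complete return word $r$ of $q$ which is a suffix of $p$; by (\ref{equiv_arich_returnpalindromicky}) $r$ is a palindrome, but $|r|>|q|$, contradicting maximality of the lps. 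For (\ref{equiv_arich_psuffix}) $\Rightarrow$ (\ref{equiv_arich_returnobecny}): given a factor $w$ and a factor $f$ containing $w$ only as a prefix and $\widetilde w$ only as a suffix, I take a prefix $p$ of $\buu$ ending at an occurrence of $f$ and argue, using closure under reversal and unioccurrence of the lps of $p$, that the lps of $f$ must be all of $f$; the analysis compares the lps of $f$ with $w$ and $\widetilde w$ and rules out a short lps via the unioccurrence condition. Finally (\ref{equiv_arich_returnobecny}) $\Rightarrow$ (\ref{equiv_arich_returnpalindromicky}) is immediate, since for a palindrome $w$ we have $\widetilde w = w$, so a complete return word is exactly a factor having $w$ only as prefix and as suffix.

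To connect the complexity identity (\ref{equiv_arich_nasa}), the route is the general inequality $\C(n+1)-\C(n)+2 \ge \Pc(n)+\Pc(n+1)$, valid for every word whose language is closed under reversal, obtained by comparing the bilateral extensions of factors of length $n$ with the central (palindromic) extensions of palindromes of lengths $n$ and $n+1$. One shows that the slack in this inequality at length $n$ measures exactly the palindromic defect contributed at that scale, so that equality holds for all $n$ precisely when $D(\buu)=0$; this yields (\ref{equiv_arich_1}) $\Leftrightarrow$ (\ref{equiv_arich_nasa}) and completes the chain.

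I expect the main obstacle to be the equivalence with (\ref{equiv_arich_nasa}): the arguments for (\ref{equiv_arich_1})–(\ref{equiv_arich_returnobecny}) are local, tracking a single longest palindromic suffix or a single return word, whereas (\ref{equiv_arich_nasa}) requires a global bookkeeping of how bilateral extensions of ordinary factors interact with the central extensions of palindromes, together with tight control of the equality case; establishing the general inequality with its exact equality condition is the delicate point. A secondary technical care is the reduction from the factor-based definition of $D(\buu)$ to the prefix-based condition (\ref{equiv_arich_psuffix}), which relies on the heredity of richness under factors and on closure under reversal.
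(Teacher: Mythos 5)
The paper never proves this theorem: it is stated as a compilation of results from the cited literature (\cite{BrHaNiRe,DrJuPi,GlJuWiZa,BuLuGlZa}), so there is no in-paper proof to compare against. Judged on its own, your outline reconstructs the standard arguments correctly for most of the cycle: the Droubay--Justin--Pirillo one-letter-extension lemma and the resulting identity ``$D(v)$ equals the number of prefixes whose longest palindromic suffix is not unioccurrent'' are right, as is the passage to the infinite word via heredity and reversal-invariance of the defect (giving \ref{equiv_arich_1}$\Leftrightarrow$\ref{equiv_arich_psuffix}), the derivation of \ref{equiv_arich_psuffix} from \ref{equiv_arich_returnpalindromicky} via the complete return word spanned by the two last occurrences of the $\lps$, and the trivial implication \ref{equiv_arich_returnobecny}$\Rightarrow$\ref{equiv_arich_returnpalindromicky}. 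One local repair: in \ref{equiv_arich_psuffix}$\Rightarrow$\ref{equiv_arich_returnobecny} the unioccurrence you need is that of $q=\lps(f)$ \emph{in $f$ itself}, not of $\lps(p)$ in an ambient prefix $p$ --- since defect zero is hereditary, $f$ is rich and $q$ is unioccurrent in $f$; then if $|q|<|\widetilde{w}|$, the palindrome $q$ is a suffix of $\widetilde{w}$, hence a prefix of $w$ and so of $f$, giving a second occurrence of $q$, while if $|q|\geq|\widetilde{w}|$ then $w$ is a prefix of $q$ and the hypothesis on occurrences of $w$ forces $|q|=|f|$. The detour through $\lps(p)$ as you describe it would stall, because the longest palindromic suffix of $p$ bears no useful relation to the factor $f$.

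The genuine gap is in your treatment of item \ref{equiv_arich_nasa}. The inequality $\C(n+1)-\C(n)+2\geq \Pc(n)+\Pc(n+1)$ for languages closed under reversal is indeed proved by the bilateral-versus-palindromic extension count you indicate, but your next claim --- that the slack at each $n$ ``measures exactly the palindromic defect contributed at that scale'' --- is not an available lemma; in summed form it is precisely the Brlek--Reutenauer formula \eqref{oni}, a former conjecture proved only in \cite{BaPeSta5} (Theorem~\ref{co:BrRe} of this paper) and substantially harder than the theorem you are proving. The established and more economical route, due to \cite{BuLuGlZa}, analyzes the equality case of the Rauzy-graph count directly and shows that equality holds for all $n$ if and only if every complete return word of every palindromic factor is a palindrome --- that is, it links \ref{equiv_arich_nasa} to \ref{equiv_arich_returnpalindromicky}, not directly to \ref{equiv_arich_1}, and then the equivalence follows from the cycle you already have. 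As written, your plan for \ref{equiv_arich_1}$\Leftrightarrow$\ref{equiv_arich_nasa} assumes the hardest part rather than proving it, which you half-acknowledge by flagging it as the main obstacle; rerouting through the equality-case analysis closes the argument with tools of the same weight as the rest of your proposal.
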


We  generalized  the previous theorem to  infinite  words with finite palindromic defect, see \cite{BaPeSta3,PeSta1}.
 In particular, we showed that an infinite word has a finite palindromic defect $D(\buu)$ if and only if the equality  $\C(n+1) - \C(n) + 2 =\Pc(n) + \Pc (n+1)$ is valid for all $n\in N$ up to finitely many exceptions.   A surprising observation that  these  exceptional indices allow to determine the value of the palindromic defect  was made by Brlek and Reutenauer. In \cite{BrRe-conjecture} they proved for infinite periodic words and  conjectured for general words the following equality
\begin{equation}\label{oni}
2 D(\buu) = \sum_{n = 0}^{+\infty} \Bigl( \C_\buu(n+1)-\C_\buu(n)+2-\Pc_\buu(n+1)-\Pc_\buu(n) \Bigr).
\end{equation}
The conjecture was  confirmed in \cite{BaPeSta5} where we showed the following theorem.

\begin{theorem} \label{co:BrRe}
Equation \eqref{oni} is true for any infinite word  $\buu$ whose language is closed under reversal.
\end{theorem}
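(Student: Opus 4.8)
The plan is to rewrite the right-hand side of \eqref{oni} as a limit of partial sums, split into two cases according to whether $D(\buu)$ is finite, and in the finite case pin the value down by counting new palindromes in long prefixes of $\buu$. First I would set $s(n) = \C(n+1)-\C(n)+2-\Pc(n)-\Pc(n+1)$ and telescope. Using $\C(0)=\Pc(0)=1$, a direct computation gives, for every $N$,
\[
T_N := \sum_{n=0}^{N} s(n) = \C(N+1) - \Pc(N+1) + 2(N+1) - 2\sum_{k=0}^{N}\Pc(k).
\]
Next I would invoke the inequality $\Pc(n)+\Pc(n+1)\le \C(n+1)-\C(n)+2$, valid for every language closed under reversal (\cite{BaMaPe}); it says exactly that $s(n)\ge 0$ for all $n$, so the sum in \eqref{oni} is a well-defined element of $[0,+\infty]$ and $(T_N)$ is nondecreasing. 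The equality $s(n)=0$ for every $n$ is item~\ref{equiv_arich_nasa} of Theorem~\ref{thm:equiv_almost_rich}, and its generalization to finite defect proved in \cite{BaPeSta3,PeSta1} states that $D(\buu)<\infty$ if and only if $s(n)=0$ for all but finitely many $n$.

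This dichotomy drives the argument. If $D(\buu)=+\infty$, then the nonnegative integers $s(n)$ satisfy $s(n)\ge 1$ for infinitely many $n$, whence $\sum_n s(n)=+\infty=2D(\buu)$ and there is nothing to prove. If $D(\buu)<\infty$, fix $n_0$ with $s(n)=0$ for all $n>n_0$; then $\sum_n s(n)=T_{n_0}$, and the theorem reduces to the single identity
\[
\C(n_0+1) - \Pc(n_0+1) + 2(n_0+1) - 2\sum_{k=0}^{n_0}\Pc(k) = 2\,D(\buu).
\]

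To evaluate $D(\buu)$ I would pass to prefixes. Appending, or prepending, one letter to a finite word raises its length by one and its number of distinct palindromic factors by zero or one, by the longest-palindromic-suffix (resp.\ prefix) argument of Droubay, Justin and Pirillo \cite{DrJuPi}; hence $D$ is nondecreasing under extension on either side, and $D(v)\le D(w)$ whenever $v$ is a factor of $w$. Consequently $D(\buu)=\sup_{v}D(v)=\lim_{M\to\infty}D(w_M)$, where $w_M$ is the prefix of $\buu$ of length $M$, and this limit stabilizes. Writing $D(w_M)$ through the same letter-by-letter counting — as the number of prefixes of $w_M$ whose longest palindromic suffix is \emph{not} unioccurrent — the task becomes to match this count against the left-hand side above, up to the factor $2$.

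The hard part, and the reason the general statement resisted proof far longer than its periodic case settled by Brlek and Reutenauer in \cite{BrRe-conjecture}, is precisely this matching. One must show that each position of $\buu$ at which richness fails contributes exactly $2$ to $T_{n_0}$. The factor $2$ is forced by closure under reversal: a factor witnessing a defect is not a palindrome, so it and its distinct reversal both occur, and each such non-palindromic special factor produces an excess in $\C(n+1)-\C(n)$ that is \emph{not} compensated by $\Pc(n)+\Pc(n+1)$, the two mirror copies accounting for the two units. Converting this heuristic into an exact bookkeeping identity — valid without any recurrence hypothesis on $\buu$, since $\buu$ need not be recurrent — is the technical core carried out in \cite{BaPeSta5}; it requires controlling how palindromic factors of length above $n_0$ arise in the prefixes $w_M$ and showing that their number is rigidly determined by the values of $\C$ and $\Pc$ up to length $n_0+1$.
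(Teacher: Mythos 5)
Your outer scaffolding is correct and matches the known strategy: the telescoped partial sums $T_N$ are computed correctly (using $\C(0)=\Pc(0)=1$), nonnegativity of $s(n)$ makes the series in \eqref{oni} well defined, and the characterization of finite defect by ``$s(n)=0$ for all but finitely many $n$'' (which the paper cites from \cite{BaPeSta3,PeSta1}) does dispose of the case $D(\buu)=+\infty$ at once. But the proposal stops exactly where the theorem begins. In the finite-defect case you correctly reduce everything to the single identity $T_{n_0}=2D(\buu)$, and then you declare the matching to be ``the technical core carried out in \cite{BaPeSta5}'' --- which is the very reference from which the theorem is quoted. A blind proof attempt cannot outsource its decisive step to the paper that proves the statement; as written, your text proves nothing beyond routine reductions that were already known before \cite{BaPeSta5}. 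The mirror-copy heuristic offered in place of the core argument is not a bookkeeping identity and does not obviously become one: the defect of a prefix is counted via prefixes whose longest palindromic suffix fails to be unioccurrent, whereas $T_{n_0}$ is expressed through $\C$ and $\Pc$, and bridging these two counts is precisely the hard lemma. It is hard for the reason you yourself name: $\buu$ need not be recurrent, so factors may occur only finitely often, occurrences of $w$ and $\widetilde{w}$ need not interleave in any controlled way, and the standard extension-graph arguments that make ``each failure contributes exactly $2$'' work in the recurrent setting do not apply verbatim. Making that local contribution statement precise is essentially equivalent to the theorem, so the proposal has a genuine gap rather than a compressed proof.

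Two smaller points. First, for calibration: the paper itself contains no proof of this theorem --- it is stated as established in \cite{BaPeSta5} --- so the comparison must be with that paper's argument, whose outer shell (telescoping, the dichotomy on $D(\buu)$, reduction to a finite identity) you reproduce faithfully, but whose substance you omit. Second, your citation of \cite{BaMaPe} for the inequality $\Pc(n)+\Pc(n+1)\le \C(n+1)-\C(n)+2$ ``for every language closed under reversal'' needs care: that inequality is proved there for \emph{uniformly recurrent} words, and verifying that the estimates needed for \eqref{oni} survive without any recurrence hypothesis is itself part of the work in the general proof, not a freely quotable fact.
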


Besides these general properties, many examples of words with zero or finite palindromic defect were found:
\begin{itemize}
%[noitemsep]
  \item In \cite{BuLuGlZa2,PeSta2}, another characterizations of rich words are given.
  \item In \cite{BuLuLu}, the relation of rich words to so-called periodic-like words is exhibited.
  \item Links to another class of words, trapezoidal words, are shown in \cite{LuGlZa_tr}.

  \item Words coding symmetric interval exchange transformations are rich by \cite{BaMaPe}.% and \Cref{thm:equiv_rich} item \ref{equiv_rich_nasa}.
  \item In \cite{BlBrLaVu11}, the authors show that words coding rotation on the unit circle with respect to partition consisting of two intervals are rich.
  \item In \cite{ReRo}, the authors show a connection of rich words with the Burrows--Wheeler transform.
  \item In \cite{Sta2015}, we show that morphic images of episturmian words, a known class of rich words, produces a word with finite palindromic defect.
  \item The articles \cite{JaPeSta1,PeSta3,Sta2011} exhibit more examples of words with finite palindromic defect (along with some examples of words with finite generalized palindromic defect).
\end{itemize}

\subsection{Palindromic defect of fixed points of morphisms}

We now focus on words that are fixed by a morphism with the assumption that their language is closed under reversal.
The main motivation to study their palindromic defect is the following conjecture.

\begin{conjecture}[Zero defect conjecture \cite{BlBrGaLa}] \label{co:defect}
Let $\buu$ be an aperiodic fixed point of a primitive morphism having its language closed under reversal.
We have $D(\buu) = 0$ or $D(\buu) = +\infty$.
\end{conjecture}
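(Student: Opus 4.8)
The plan is to argue by contradiction: I assume $0 < D(\buu) < +\infty$ and try to exploit the self-similarity forced by the fixed-point equation $\varphi(\buu) = \buu$ together with primitivity to manufacture infinitely many distinct ``missing'' palindromes, which would contradict finiteness of the defect. The starting point is the summation formula of Theorem~\ref{co:BrRe}. Writing
\[
T(n) = \C_\buu(n+1) - \C_\buu(n) + 2 - \Pc_\buu(n+1) - \Pc_\buu(n),
\]
equation \eqref{oni} reads $2D(\buu) = \sum_{n \ge 0} T(n)$, and the finite-defect generalization of Theorem~\ref{thm:equiv_almost_rich} (see \cite{BaPeSta3,PeSta1}) guarantees $T(n) \ge 0$ for every $n$. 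Hence the assumption $0 < D(\buu) < +\infty$ is equivalent to saying that the set $\{ n : T(n) > 0 \}$ is finite and nonempty; in particular there is a largest length $m$ at which defect is created. By the unioccurrent-longest-palindromic-suffix characterization in Theorem~\ref{thm:equiv_almost_rich}, this defect is carried by a concrete local configuration — a prefix whose longest palindromic suffix fails to be unioccurrent, equivalently (by the complete-return-word characterization) a palindromic factor admitting a non-palindromic complete return word.

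Next I would translate the analytic quantity $T(n)$ into the combinatorial skeleton of $\buu$. The first difference $\C_\buu(n+1) - \C_\buu(n)$ is governed by the bispecial factors of length $n$, and $\Pc_\buu(n)$ is governed by the palindromic bispecial factors, so each nonzero $T(n)$ can be localized at a bispecial factor and measured as a local imbalance between its left/right bilateral extensions and its palindromic extensions. The structural input specific to our hypothesis is that, for a fixed point of a primitive morphism, the bispecial factors are far from arbitrary: they split into finitely many families, each generated by repeatedly applying $\varphi$ (or the associated return-word substitution obtained from the self-similar structure) to a finite set of seeds. This hierarchical description of bispecial factors is what I intend to mine.

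The heart of the argument is a propagation step: I want to track how the palindromic-defect contribution of a bispecial factor $B$ transforms when $B$ is replaced by its descendants under the morphism. The target statement is a rigid dichotomy at the level of a single family — if some bispecial factor $B$ carries a strictly positive contribution, then its image carries a strictly positive contribution at a strictly larger length, producing a \emph{new, distinct} missing palindrome, so that a single defect-creating configuration forces an infinite cascade and $D(\buu) = +\infty$; otherwise every contribution is forced to cancel and $D(\buu) = 0$. Uniform recurrence, which holds because primitive substitutive words are linearly recurrent, is used to ensure that the propagated configurations are genuinely realized as factors of $\buu$, so the cascade really contributes to the defect and is not an artifact of the combinatorial bookkeeping.

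The hard part will be precisely this propagation across scales. Since $\varphi$ is only assumed to have a reversal-closed language and \emph{not} to be of class $P'$, it need not send palindromes to palindromes, and the centers of palindromes need not sit at predictable positions inside the blocks $\varphi(a)$; consequently the behaviour of a longest palindromic suffix, or of a non-palindromic complete return word, under desubstitution is delicate and resists a uniform description. Controlling the interaction between the reversal map and a general primitive morphism is exactly the point at which the conjecture has remained open. This is where the paper's study of compatible and incompatible rich words is meant to intervene: a classification of which pairs of rich words cannot co-occur as factors of a longer rich word should let one certify that a propagated configuration is \emph{forced} to recreate a missing palindrome rather than be completed into a palindromic return word, thereby closing the induction that drives the cascade.
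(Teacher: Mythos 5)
There is a fundamental problem here that precedes any technical assessment: the statement you are attempting to prove is Conjecture~\ref{co:defect}, not a theorem, and the paper explicitly records that it is \emph{false} as stated --- counterexamples were given in \cite{Basic13,BuVa12} (aperiodic fixed points of primitive morphisms with reversal-closed language and finite \emph{nonzero} palindromic defect). The paper offers no proof of the conjecture and could not; it only proves restricted versions, Theorem~\ref{th:0dc_main} (marked morphisms, with an extra hypothesis on complete return words of letters or non-stationarity) and Theorem~\ref{th:ZDCbinary} (binary alphabet). Consequently no blind proof of the statement can be correct, and the specific point at which yours breaks is identifiable: the ``propagation dichotomy'' you place at the heart of the argument --- that a single bispecial factor carrying a strictly positive contribution $T(n)>0$ must, under the action of $\varphi$, spawn a new strictly positive contribution at a strictly larger length, forcing an infinite cascade --- is exactly the assertion the counterexamples refute. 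In those examples the set $\{n : T(n)>0\}$ is finite and nonempty (this is consistent with Theorem~\ref{co:BrRe} and $T(n)\geq 0$, both of which you invoke correctly), so a defect-creating configuration can be ``absorbed'' by the substitution rather than propagated; no general mechanism forces the cascade.

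Beyond this fatal obstruction, note that even on its own terms your text is a research program rather than a proof: you yourself flag the propagation step as ``the hard part'' that ``resists a uniform description,'' and you defer its resolution to an unproved hope that the compatibility results of the paper (Propositions~\ref{my} and~\ref{my2}) would certify the cascade. Those propositions concern pairs of rich words that cannot co-occur in a rich word; they say nothing about how longest palindromic suffixes or non-palindromic complete return words transform under desubstitution, which is the control your induction needs. The instructive comparison with the paper's actual positive results is that both Theorem~\ref{th:0dc_main} and Theorem~\ref{th:ZDCbinary} succeed precisely because the extra hypotheses (markedness, or a two-letter alphabet) restore enough rigidity in how $\varphi$ interacts with reversal to carry out a propagation-type argument; dropping them, as your sketch does, is known to be unsound. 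A salvageable version of your plan would have to first identify a structural property separating the counterexamples of \cite{Basic13,BuVa12} from the marked/binary cases and build the cascade only under that property --- which is essentially the open refinement problem the paper itself poses.
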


The Thue--Morse word $\btt$ and the Fibonacci word $\bff$ are examples of aperiodic fixed points of a primitive morphism (see Example~\ref{ex:TM_Fibo}) having their language closed under reversal.
We have $D(\bff) = 0$ and $D(\btt) = +\infty$.

Counterexamples to the conjecture were given in \cite{Basic13,BuVa12}.
Thus, the current statement of the conjecture is not true.
There still might some refinement of the current statement that is valid as there are many witnesses and the found counterexamples seem to have some specific properties.
Indeed, in \cite{LaPeSta1} we prove that the conjecture is true for a special class of morphisms.
A morphism $\varphi$ is \textit{marked} if there exists two morphisms $\varphi_1$ and $\varphi_2$, both being conjugate to $\varphi$,
such that \[
\{ \text{last letter of } \varphi_1(a) \colon  a \in \A \} = \{ \text{first letter of } \varphi_2(a) \colon  a \in \A \} = \A.
\]
In other words, the set of the last letters of the images of letters by $\varphi_1$ is the whole alphabet $\A$ and the set of the first letters of the images of letters by $\varphi_2$ is also the whole alphabet $\A$.

For instance, $\varphi = \varphi_{TM}: 0 \mapsto 01, 1 \mapsto 10$ is marked (here $\varphi = \varphi_1 = \varphi_2$).
For $\varphi = \varphi_F: 0 \mapsto 01, 1 \mapsto 0$ we have $\varphi = \varphi_1$ and $\varphi_2: 0 \mapsto 10, 1 \mapsto 0$.
Thus, $\varphi_F$ is also marked.

If a morphism $\varphi$ is conjugate to no other morphism except for $\varphi$ itself, then we say that $\varphi$ is \emph{stationary}.
In other words, a morphism $\varphi$ is stationary if the longest common prefix and the longest common suffix of $\varphi$-images of all letters are both empty words.

In \cite{LaPeSta1} we show the following theorems:
\begin{theorem} \label{th:0dc_main}
Let $\varphi$ be a primitive marked morphism and let $\buu$ be its fixed point with finite palindromic defect.
If all complete return words of all letters in $\buu$ are palindromes or $\varphi$ is not stationary, then $D(\buu) = 0$.
\end{theorem}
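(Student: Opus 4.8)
The goal is to reach statement \ref{equiv_arich_1} of Theorem~\ref{thm:equiv_almost_rich} through its equivalent form \ref{equiv_arich_returnpalindromicky}: I will show that every complete return word of every palindromic factor of $\buu$ is a palindrome. Before starting I record that a fixed point of a primitive morphism is uniformly recurrent, and that a uniformly recurrent word of finite defect has arbitrarily long palindromic factors; consequently $\Lu$ is closed under reversal, so Theorems~\ref{thm:equiv_almost_rich} and~\ref{co:BrRe} and the finite-defect characterization ``all but finitely many summands of \eqref{oni} vanish'' are all available. The engine of the proof is a renormalization along $\varphi$. Because $\varphi$ is marked, after passing to the conjugates $\varphi_1$ and $\varphi_2$ the blocks $\varphi(a)$ are recognizable from their last letters and from their first letters respectively; hence every sufficiently long factor $w$ of $\buu$ has a unique desubstitution $w = s\,\varphi(w')\,p$ with $w' \in \Lu$ and with $s,p$ bounded suffix and prefix of images. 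I will use this parsing to pull palindromes, and their complete return words, down to strictly shorter ones.

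First I would establish the palindrome correspondence. Using the two-sided recognizability and the fixed-point equation $\varphi(\buu)=\buu$, I want to show that a palindromic factor $w$ longer than the recognizability bound desubstitutes to a palindrome $w'$, the boundary words $s$ and $p$ being forced to be mirror to one another by the centering of $w$ inside the block structure; conversely, the image of a palindrome, completed by the appropriate boundary blocks, is again a palindrome. This is the step that genuinely needs markedness at both ends: left recognizability fixes the cut on one side and right recognizability fixes the matching cut on the mirror side, so the palindromic centre is preserved by desubstitution.

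Next I would transport complete return words across this correspondence. Given a palindrome $w$ with associated shorter palindrome $w'$, I would match each complete return word $r$ of $w$ with a complete return word $r'$ of $w'$ so that $r$ is a bounded completion of $\varphi(r')$, and show the matching is a bijection. Since by the previous paragraph the passage ``$\varphi$ plus boundary blocks'' carries palindromes to palindromes and non-palindromes to non-palindromes, $r$ is a palindrome if and only if $r'$ is. Each desubstitution strictly decreases the length of the palindrome under consideration, so after finitely many steps every palindrome is reduced to one shorter than the recognizability bound, and in particular the whole question descends to the complete return words of the letters. It therefore suffices to prove that every complete return word of every letter of $\buu$ is a palindrome.

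This last, base, assertion is exactly what the two alternative hypotheses secure, and it is where the main obstacle lies. Under the first hypothesis it is assumed outright, and the renormalization then immediately promotes it to all palindromes, yielding $D(\buu)=0$. Under the second hypothesis $\varphi$ is merely non-stationary, so the images $\varphi(a)$ share a non-empty common prefix or suffix; this produces a conjugate of $\varphi$ genuinely different from $\varphi$, and the plan is to realize the complete return words of a letter as images, through that conjugate, of shorter return words whose boundary blocks are palindromic, thereby forcing them to be palindromes. The difficulty is that renormalization cannot create richness from nothing: a single letter with a non-palindromic complete return word would, by the self-similar structure of the bispecial factors under a marked morphism, reproduce a positive contribution to \eqref{oni} at every scale and hence make the defect infinite. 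Turning this heuristic into a proof, namely showing that non-stationarity together with the finiteness of the defect read from \eqref{oni} leaves no room for a non-palindromic complete return word of a letter, is the technical heart of the argument.
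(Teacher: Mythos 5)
Your proposal has a genuine gap --- in fact two, and you name one of them yourself. The final paragraph concedes that the base case under the non-stationarity hypothesis (``no room for a non-palindromic complete return word of a letter'') is only a heuristic and is ``the technical heart of the argument.'' But that base case is precisely where the theorem's peculiar hypothesis lives; a proof that reduces everything to an admittedly unproven core proves nothing. Moreover, even granting your renormalization machinery, the descent does not land where you claim: desubstitution only applies to palindromes longer than the recognizability bound, so after finitely many steps you are left with \emph{all} palindromic factors of length up to that bound, not just the letters. The hypothesis of the theorem speaks only of complete return words of letters, so short non-letter palindromes are unaccounted for in your reduction.

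The second gap is in the palindrome correspondence itself, which you assert follows from markedness ``at both ends.'' Recognizability of a marked morphism gives the unique parsing $w = s\,\varphi(w')\,p$, but when $w$ is a palindrome the mirrored parsing of $\widetilde{w}=w$ is by the blocks $\widetilde{\varphi(a)}$, and these are in general not $\varphi$-images of anything; nothing forces the mirrored cut points to be $\varphi$-cut points, so $s$ and $p$ need not be mirror to one another and $w'$ need not be a palindrome. What makes desubstitution (or substitution) interact with reversal is not markedness but the class $P'$ structure: one needs that a power of $\varphi$, up to conjugacy, satisfies $\varphi(a)=pp_a$ with $p, p_a$ palindromes, for then $\varphi(w)p$ is a palindrome whenever $w$ is, and conversely. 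This is exactly the result of \cite{LaPe14} on well-marked powers of marked palindromic morphisms, and the present paper explicitly flags that ``the technique and results used in the proofs of the latter fact is crucial in showing the defect conjecture for marked morphisms in \cite{LaPeSta1}.'' With that tool the natural argument runs in the direction opposite to yours: a single non-palindromic complete return word is propagated \emph{upward} through the class-$P'$ power to manufacture violations of the equality $\C(n+1)-\C(n)+2=\Pc(n)+\Pc(n+1)$ at infinitely many scales, contradicting finiteness of $D(\buu)$ via \eqref{oni} and Theorem~\ref{co:BrRe}; this avoids both of the difficulties above, since no base case at small lengths is ever needed. Your sketch gestures at this upward propagation in its last sentences, but without the class-$P'$ ingredient you have no mechanism that carries non-palindromicity from one scale to the next.
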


Moreover, the binary alphabet allows for all of the assumptions to be dropped:

\begin{theorem} \label{th:ZDCbinary}
If $\buu \in \A^\N$ is a fixed point of a primitive morphism over binary alphabet and $D(\buu) < +\infty$, then $D(\buu) = 0$ or $\buu$ is periodic.
\end{theorem}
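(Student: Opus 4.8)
The plan is to run a dichotomy on the two images $\varphi(0)$ and $\varphi(1)$ (write $\A=\{0,1\}$) and to reduce the whole statement to Theorem~\ref{th:0dc_main}. If $\buu$ is periodic there is nothing to prove, so I assume $\buu$ is aperiodic and aim to show $D(\buu)=0$. Everything then hinges on two facts: that such a $\varphi$ is necessarily marked, and that over a binary alphabet the complete-return-word hypothesis of Theorem~\ref{th:0dc_main} holds automatically. Granting both, Theorem~\ref{th:0dc_main} applies to the primitive marked morphism $\varphi$ whose fixed point $\buu$ has finite palindromic defect, and delivers $D(\buu)=0$.

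For markedness I would argue the contrapositive: if $\varphi$ is not marked then $\buu$ is periodic. By definition, not being marked means that no conjugate of $\varphi$ separates the first letters of the two images or none separates the last letters; over $\{0,1\}$, ``covering $\A$'' simply means ``distinct''. Conjugating by a common prefix $w$ replaces each image $w r_a$ by $r_a w$, so if the longest common prefix $p$ of $\varphi(0),\varphi(1)$ is a proper prefix of both, a single conjugation already makes the first letters distinct. Hence non-separability of first letters forces one image to be a prefix of the other, and iterating this subtractive (Euclidean) reduction terminates with distinct first letters unless $\varphi(0)$ and $\varphi(1)$ are powers of a common word $z$ --- the classical characterization of commuting words. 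The last-letter condition is equivalent after passing to reversals, since $u,v$ commute iff $\widetilde u,\widetilde v$ do. But if $\varphi(0)=z^{i}$ and $\varphi(1)=z^{j}$, then every $\varphi$-image is a power of $z$, so $\buu=\varphi(\buu)=z^{\omega}$ is periodic, contradicting aperiodicity. Therefore $\varphi$ is marked.

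The second fact is specific to the binary alphabet: every complete return word of a letter is a palindrome. Between two consecutive occurrences of $0$ only the letter $1$ can appear, so each complete return word of $0$ has the shape $01^{k}0$, which equals its reversal; symmetrically each complete return word of $1$ has the shape $10^{j}1$. Thus all complete return words of all letters of $\buu$ are palindromes, with no assumption on $\varphi$ beyond binarity, and in particular the stationary/non-stationary distinction in Theorem~\ref{th:0dc_main} never needs to be examined. Combining this with the previous paragraph, $\varphi$ is primitive and marked, $\buu$ has finite palindromic defect, and the return-word hypothesis is met, so Theorem~\ref{th:0dc_main} gives $D(\buu)=0$, completing the aperiodic case.

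I expect the markedness step to be the only genuine obstacle. The return-word observation and the final invocation of Theorem~\ref{th:0dc_main} are immediate, whereas ``not marked $\Rightarrow$ images commute'' requires a careful treatment of how conjugation acts on the two images together with the classical fact that two words commute precisely when they are powers of a common word; this combinatorics on words is where the real work lies.
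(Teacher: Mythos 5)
Your proposal is correct and follows essentially the same route as the source: the paper states Theorem~\ref{th:ZDCbinary} without proof, citing \cite{LaPeSta1}, where it is deduced from Theorem~\ref{th:0dc_main} exactly as you do. Both of your key reductions are sound --- on a binary alphabet every complete return word of a letter has the form $01^k0$ or $10^j1$ and is automatically a palindrome, and if no conjugate of $\varphi$ separates first letters (or none separates last letters), the successive one-letter shifts force $\varphi(0)^\omega=\varphi(1)^\omega$, so by Fine--Wilf the two images are powers of a common word and $\buu$ is periodic; hence in the aperiodic case $\varphi$ is marked and Theorem~\ref{th:0dc_main} gives $D(\buu)=0$.
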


We thus confirm that for a large class of fixed points of morphisms, their palindromic defect is either zero or infinite.

\subsection{Enumeration of Rich Words}

Let $R_{d}(n)$ denote the number of rich words of length $n$ over an alphabet with $d$ elements.
As we have already mentioned, there is no closed-form formula for $R_{d}(n)$.

 In \cite{Vesti2014}, Vesti gives a recursive lower
bound on $R_{d}(n)$  and an upper bound on $R_{2}(n)$.  Both these estimates seem to be very rough.

 In
\cite{GuShSh15}, Guo, Shallit and Shur constructed for each $n$ a large set of binary rich
words of length $n$.
They show that for any two sequences of integers $0\leq n_1\leq n_2 \leq \cdots \leq n_k$ and $0\leq m_1\leq m_2 \leq \cdots \leq m_k$  satisfying  $n= \sum_{i=1}^k n_k +  \sum_{i=1}^k m_k$, the word $a^{n_1}b^{m_1}a^{n_1}b^{m_1}\cdots a^{n_k}b^{m_k}$ of length $n$ is rich.
This construction gives, currently, the best lower bound
on the number of binary rich words, namely $R_2(n) \geq \frac{ C^{\sqrt{n}}}{p(n)}$ where $ p(n)$ is a
polynomial and the constant $C \sim 37$.   They also conjectured that  $R_2(n) = \Theta\Bigl(\frac{n}{g(n)}\Bigr)^{\sqrt{n}}$ for some
infinitely growing function $g(n)$.

The best upper bound is provided by Rukavicka in \cite{Ru17}. He shows that   $R_d(n)$ has a subexponential
growth on any alphabet.  More precisely, for any cardinality $d$ of the alphabet  $\lim\limits_{n\to \infty}\sqrt[n]{R_d(n)}=1$.
The result uses a specific factorization of a rich word into
distinct rich palindromes, called UPS-factorization (Unioccurrent Palindromic
Suffix factorization).

\section{Compatible Pairs}

The set of rich words is a factorial language but it is not recurrent.
Let us recall that a language $\mathcal{L} \subset \mathcal{A}^*$ is {\em recurrent} if for any two words $u,v \in  \mathcal{L}$ there exists $w \in \mathcal{L}$ such that $u$ is a prefix of $w$ and $v$ is a suffix of $w$.
Using  results of Glen et al. \cite{GlJuWiZa}, Vesti in~\cite{Vesti2014} formulated a sufficient condition which prevents two rich words $u,v$ to be simultaneously  factors of another rich word.
His proposition uses the notion of {\em longest palindromic suffix} of a factor $u$, denoted $\lps(u)$ and  {\em longest palindromic prefix} of a factor $u$, denoted $\lpp(u)$.
We say that two finite words are {\em compatible} if there exists a rich word having these two words as factors.

\begin{proposition}\label{vesti}  Let   $u$ and $v$ be two words such that
\begin{equation}\label{E1} u\neq  v, \ \ u,v \ \  \text{rich}, \quad    \lpp(u)=\lpp(v) \quad \text{and}\quad  \lps(u)=\lps(v).
\end{equation} If a word $w$ contains factors $u$ and $v$,  then $w$ is not rich, i.e.,    $u$ and $v$  are not compatible.
\end{proposition}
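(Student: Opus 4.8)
The engine I would use is the finite-word reading of Theorem~\ref{thm:equiv_almost_rich}, item~\eqref{equiv_arich_psuffix}: a finite word $r$ is rich if and only if, scanning $r$ from left to right, each new letter completes a \emph{new} palindrome, namely the longest palindromic suffix of the current prefix, which is therefore unioccurrent in that prefix. This gives a bijection between the $|r|$ positions of $r$ and its $|r|$ nonempty palindromic factors, sending a position to the palindrome whose first occurrence ends there; a clash of two positions mapping to the same palindrome is the contradiction I will aim for. Applying this to the whole word $r$ and to its reversal $\widetilde r$ (rich iff $r$ is), I get the two facts used throughout: $\lps(r)$ occurs exactly once in $r$, as a suffix, and $\lpp(r)$ occurs exactly once, as a prefix. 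Write $p=\lpp(u)=\lpp(v)$ and $s=\lps(u)=\lps(v)$.

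First I would record a purely combinatorial reduction that does not yet use the ambient word: neither of $u,v$ is a factor of the other. Indeed, if $u$ were a factor of $v$ with $u\neq v$, then the suffix occurrence of $s$ inside the occurrence of $u$ either coincides with the suffix of $v$ -- forcing $u$ to be a suffix of $v$, so that $p$ occurs both at the start of $v$ and at the start of the inner $u$, i.e. twice in $v$ -- or it ends strictly inside $v$, producing a second occurrence of $s$ in $v$. Either way this contradicts the unioccurrence of $p$, respectively $s$, in $v$. Symmetrically $v$ is not a factor of $u$; in particular neither is a prefix or a suffix of the other.

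Now suppose for contradiction that some rich word contains both $u$ and $v$, and choose such a witness $w$ of minimal length. Since neither of $u,v$ is a factor of the other, their extreme occurrences in $w$ cannot be nested, so after trimming $w$ to the span between them I may assume (exchanging the names of $u$ and $v$ if needed) that $u$ is a prefix and $v$ is a suffix of $w$. Minimality then forces $u$ to be unioccurrent in $w$, only as a prefix, and $v$ unioccurrent, only as a suffix: a further occurrence of $u$ must start at some position $a\ge 2$ with $a\le |w|-|v|+1$ (otherwise $u$ would sit inside $v$, contradicting the previous step), and then $w_{[a..|w|]}$ is a strictly shorter witness. Because $u$ is a prefix of $w$ with $\lpp(u)=p$, every palindromic prefix of $w$ of length at most $|u|$ is a palindromic prefix of $u$ and hence has length at most $|p|$; so either $\lpp(w)=p$ or $\lpp(w)$ is a palindrome strictly longer than $u$. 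Symmetrically, either $\lps(w)=s$ or $\lps(w)$ is a palindrome strictly longer than $v$. If $\lpp(w)=p$ and $\lps(w)=s$, then the pair $(u,w)$ satisfies all of the hypotheses in \eqref{E1} while $u$ is a prefix of $w$ -- impossible by the reduction above. This is the clean case.

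The remaining and main difficulty is when $w$ carries a palindromic suffix $R=\lps(w)$ strictly longer than $v$ (or, symmetrically, a palindromic prefix strictly longer than $u$). Then $R=gv$ for a nonempty $g$, and since $R$ is a palindrome it also begins with $\widetilde v$, which reaches across $w$ towards the prefix $u$. The plan is to exploit the unioccurrence of $R$ in $w$ (it is the longest palindromic suffix of the full word) together with the unioccurrence of $u$ and $v$ to produce either a repeated palindrome, contradicting the position--palindrome bijection, or a strictly shorter rich word still containing $u$ and $v$, contradicting minimality of $w$. I expect this to be the real obstacle: one must control a palindromic suffix that \emph{overshoots} $v$ and simultaneously a palindromic prefix that may overshoot $u$, where the two long palindromes can overlap in the middle of $w$. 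It is precisely here that both hypotheses $\lpp(u)=\lpp(v)$ and $\lps(u)=\lps(v)$ are needed, each one blocking the overshoot on its own side; a one-sided hypothesis would leave the opposite extension uncontrolled and the argument would not close.
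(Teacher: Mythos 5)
Your preliminary steps are correct (the unioccurrence of $\lps$ and $\lpp$ in a rich word, the fact that under \eqref{E1} neither of $u,v$ is a factor of the other, the degenerate situation $\lpp=\lps$, and the ``clean case'' $\lpp(w)=p$, $\lps(w)=s$), but the proposal stops exactly where the proof has to be done: the case where $R=\lps(w)$ overshoots $v$ is announced as ``the plan'' and never carried out, and this case is the entire content of the statement, not a residual difficulty. Worse, your framework cannot close it as set up. Your minimal witness is the shortest \emph{rich word containing $u$ and $v$}, and this family is not closed under reversal: when $R=gv$ with $g$ nonempty, the palindrome $R$ begins with $\widetilde v$, so the strictly shorter factor you can extract (the prefix of $w$ ending at that occurrence of $\widetilde v$) contains $u$ and $\widetilde v$ --- not $v$. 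It lies outside your family, so minimality of $w$ is not contradicted, and no repeated palindrome is produced either; the descent simply does not go through. Your closing sentence about each hypothesis ``blocking the overshoot on its own side'' is an expectation, not an argument.

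The repair is the symmetrization that the paper itself uses in its proof of the analogous Proposition~\ref{my} (note the paper states Proposition~\ref{vesti} without proof, citing \cite{Vesti2014}). First strengthen your containment step to the full analogue of \eqref{E3}: $u,\widetilde u\notin \L(v)\cup\L(\widetilde v)$ and symmetrically. (For instance, $\widetilde u\in\L(v)$ forces a second occurrence of $s$ in $v$, since $s=\widetilde s$ is a prefix of $\widetilde u$, unless $|u|\le |s|$, which collapses into the degenerate case $u=p=s=v$.) Then take $f$ to be the \emph{shortest factor of $w$ containing ($u$ or $\widetilde u$) and ($v$ or $\widetilde v$)}. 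Now an overshooting $\lps(f)$ does produce a strictly shorter member of the family --- here the strengthened \eqref{E3} is also what guarantees the extracted occurrence of $\widetilde v$ does not end inside $u$, a point you could not even invoke --- so $\lps(f)$ equals the $\lps$ of whichever of $v,\widetilde v$ is the suffix, and symmetrically $\lpp(f)=\lpp(u)=p$. In the case ($u$ proper prefix, $v$ proper suffix) one gets $\lps(f)=s$, which also occurs ending the prefix occurrence of $u$: two occurrences, contradicting unioccurrence of $\lps(f)$ in the rich word $f$. In the mixed case ($u$ proper prefix, $\widetilde v$ proper suffix) one gets $\lps(f)=\lps(\widetilde v)=p=\lpp(f)$, so $p$ occurs both as a prefix and as a suffix of $f$, again two occurrences; the remaining cases follow by reversal and by the symmetry of \eqref{E1} in $u$ and $v$. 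Without this reversal-symmetric choice of the minimal factor, your main case is a genuine hole.
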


We give an example which demonstrates that a word $w$ can be non-rich without  containing factors $u$ and $v$ satisfying \eqref{E1}.
\begin{example} \label{ex:exx} Consider the word $w=11010011$, which is not rich. In fact, it is a factor of the Thue--Morse word.
As pointed out in \cite{BlBrFrLaRi},  the length 8 is the shortest length of a non-rich binary word.

Table~\ref{ta:exx} depicts all non-empty rich factors $u$ of $w$ together with the pairs $(\lpp(u), \lps(u))$.
The map $u \mapsto (\lpp(u), \lps(u))$ is injective.    In other words, no pair of factors $u,v$ of the non-rich word $w=11010011$  satisfies \eqref{E1}.

\begin{table}

\begin{center}
\begin{minipage}[t]{0.5\textwidth} \centering
\begin{tabular}{cc}
$u$ & $(\lpp(u), \lps(u))$ \\ \hline
$1$ & $(1,1)$ \\
$11$ & $(11,11)$ \\

$110$ & $(11,0)$ \\

$1101$ & $(11,101)$ \\

$11010$ & $(11,010)$ \\

$110100$ & $(11,00)$ \\

$1101001$ & $(11,1001)$ \\

$10$ & $(1,0)$ \\

$101$ & $(101,101)$ \\

$1010$ & $(101,010)$ \\

$1010$ & $(101,010)$ \\

$10100$ & $(101,00)$ \\

$101001$ & $(101,1001)$ \\

$1010011$ & $(101,11)$ \\

\end{tabular}
\end{minipage}% don't remove this comment!
\begin{minipage}[t]{0.5\textwidth}  \centering
\begin{tabular}{cc}
$u$ & $(\lpp(u), \lps(u))$ \\ \hline

$0$ & $(0,0)$ \\

$01$ & $(0,1)$ \\

$010$ & $(010,010)$ \\

$0100$ & $(010,00)$ \\

$01001$ & $(010,1001)$ \\

$010011$ & $(010,11)$ \\

$100$ & $(1,00)$ \\

$1001$ & $(1001,1001)$ \\

$10011$ & $(1001,11)$ \\

$00$ & $(00,00)$ \\

$001$ & $(00,1)$ \\

$0011$ & $(00,11)$ \\

$011$ & $(0,11)$ \\
& \\ %to align on top ...
\end{tabular}

\end{minipage}
\end{center}
\caption{All non-empty rich factors $u$ of $w$ from Example~\ref{ex:exx} together with the pairs $(\lpp(u), \lps(u))$.}
\label{ta:exx}
\end{table}

\end{example}

Let us formulate  another sufficient condition for non-richness of a word $w$.

\begin{proposition}\label{my} Let $u$ and $v$ be two words satisfying
\begin{equation}\label{E2} u\neq  \widetilde{v}, \ \ u,v \ \  \text{rich}, \quad    \lps(u)=\lpp(v) \quad \text{and}\quad  \lps(v)=\lpp(u).
\end{equation} If a word $w$ contains factors $u$ and $v$,  then $w$ is not rich.
\end{proposition}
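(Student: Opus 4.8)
The plan is to argue by contradiction: assume there is a rich word $w$ having both $u$ and $v$ as factors, and derive $u=\widetilde v$, contradicting the hypothesis $u\neq\widetilde v$ in~\eqref{E2}. Write $p:=\lps(u)=\lpp(v)$ and $q:=\lpp(u)=\lps(v)$; by~\eqref{E2} these are palindromes, and $u$ begins with $q$ and ends with $p$, while $v$ begins with $p$ and ends with $q$. Since the language of rich words is factorial, $u$ and $v$ are themselves rich, so by the finite analogue of item~\ref{equiv_arich_psuffix} of Theorem~\ref{thm:equiv_almost_rich} (and its mirror, obtained by reversal) the longest palindromic suffix and the longest palindromic prefix of any rich word are unioccurrent in it. Applied to $u$ this shows $q$ occurs in $u$ only as a prefix and $p$ only as a suffix, and symmetrically for $v$. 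A short computation also gives $p\neq q$: if $p=q$ then $\lpp(u)=\lps(u)$, and unioccurrence of the longest palindromic suffix forces $u=p$ and likewise $v=p$, whence $u=\widetilde v$, excluded. Thus all four border palindromes are proper.

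The tempting shortcut is to pass to $\widetilde v$, which is rich and satisfies $\lpp(\widetilde v)=\lps(v)=q=\lpp(u)$ and $\lps(\widetilde v)=\lpp(v)=p=\lps(u)$; so $u$ and $\widetilde v$ carry the \emph{same} pair $(\lpp,\lps)=(q,p)$ and are distinct, and Proposition~\ref{vesti} would forbid them from lying in a common rich word. \textbf{This is exactly where the main obstacle sits:} the set of factors of a finite rich word is \emph{not} closed under reversal (for instance $01$ is rich but $10$ is not among its factors), so $\widetilde v$ need not be a factor of $w$, and Proposition~\ref{vesti} cannot be invoked as a black box. The crossed hypotheses~\eqref{E2} are the mirror image of the aligned hypotheses~\eqref{E1}, and bridging this mirror inside a \emph{single} word is the heart of the matter.

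I therefore reduce the statement to the following crossed analogue of the injectivity behind Proposition~\ref{vesti}: in a rich word, any two factors $x,y$ with $\lps(x)=\lpp(y)$ and $\lpp(x)=\lps(y)$ satisfy $x=\widetilde y$. Taking $x=u$ and $y=v$ then yields $u=\widetilde v$ and the contradiction. To prove the crossed statement I would compare $u$ and $\widetilde v$ letter by letter starting from their common prefix $q$ (recall both begin with $q$ and end with $p$). At a hypothetical first position where they disagree, the prefix already read is a common word $t$ having $q$ as a prefix, while $u$ continues by some letter $a$ and $\widetilde v$ by some letter $b\neq a$. Here $ta$ is a factor of $w$, whereas the disagreeing continuation of $\widetilde v$ is visible in $w$ only through its reversal $b\widetilde t$, a suffix-side factor inherited from $v$.

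The step I expect to be the main obstacle is ruling out this disagreement, precisely because the two competing continuations $ta$ and $b\widetilde t$ lie on opposite sides of the mirror, so one cannot simply quote Proposition~\ref{vesti}. The plan is to transport the constraint across the mirror by applying the complete-return-word characterization (item~\ref{equiv_arich_returnpalindromicky} of Theorem~\ref{thm:equiv_almost_rich}, in its finite form) to the anchor palindromes $p$ and $q$ inside $w$: since every complete return word of $p$ and of $q$ in the rich word $w$ is itself a palindrome, the growth of the palindromic border is forced to be reversal-symmetric, which should preclude $a\neq b$ and force $u=\widetilde v$. Making this transport precise — controlling the return words of $p$ and $q$ in $w$ itself, rather than in the nonexistent common rich word that would contain both $u$ and $\widetilde v$ — is the delicate part, and it is where richness of $w$ must be used most carefully.
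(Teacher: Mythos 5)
Your setup is sound (richness of $u,v$, unioccurrence of $\lpp$ and $\lps$ in rich words, the observation that Proposition~\ref{vesti} cannot be applied to $u$ and $\widetilde{v}$ because $\widetilde{v}$ need not be a factor of $w$), and your ``crossed analogue'' is indeed an exact reformulation of the proposition. But the proof stops precisely at its core. The entire content of the statement is concentrated in ruling out the first disagreement between $u$ and $\widetilde{v}$, and there you only write that palindromicity of complete return words of $p$ and $q$ ``should preclude $a\neq b$'' --- this is an expectation, not an argument. It is far from automatic: the occurrences of $u$ and $v$ inside $w$ are in unknown relative position and order (disjoint, overlapping, nested), the factors $ta$ and $b\widetilde{t}$ witnessing the disagreement live at unrelated places in $w$, and nothing in your sketch pins down a concrete complete return word of $p$ or $q$ whose non-palindromicity would follow from $a\neq b$. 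Asserting that return words force ``reversal-symmetric growth of the border'' is essentially a restatement of the claim being proved. In addition, your letter-by-letter comparison silently assumes a disagreement exists; the degenerate cases where $u$ is a proper prefix of $\widetilde{v}$ (equivalently $\widetilde{u}$ is a proper suffix of $v$), or more generally where one of $u,\widetilde{u}$ is a factor of $v$ or $\widetilde{v}$, need their own argument and are never treated.

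For comparison, the paper closes exactly this gap with an extremal argument rather than a return-word transport. First it proves from \eqref{E2} that $u,\widetilde{u}\notin\L(v)\cup\L(\widetilde{v})$ and symmetrically (this disposes of your missing containment cases, by short unioccurrence arguments for $\lpp$ and $\lps$). Then, assuming $w$ rich, it takes $f$ to be the \emph{shortest} factor of $w$ containing one of $u,\widetilde{u}$ and one of $v,\widetilde{v}$; minimality of $f$ forces $\lpp(f)$ and $\lps(f)$ to coincide with the border palindromes of the occurrences at its two ends (e.g.\ if $u$ is a proper prefix and $v$ a proper suffix of $f$, then $\lps(f)=\lps(v)=\lpp(u)$, which then occurs in $f$ both as a suffix and inside the prefix $u$), so $\lps(f)$ occurs twice in $f$, contradicting item~\ref{equiv_arich_psuffix} of Theorem~\ref{thm:equiv_almost_rich} in its finite form. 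The minimal choice of $f$ is the device your plan lacks: it is what lets one identify $\lps(f)$ exactly and manufacture the double occurrence, with no need to control return words of $p$ and $q$ throughout $w$. As it stands, your proposal correctly frames the problem but does not prove it.
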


\begin{proof}
First we show (by contradiction)   that  the assumption \eqref{E2} gives
\begin{equation}\label{E3}
u, \widetilde{u} \notin \L(v)\cup \L(\widetilde{v}) \quad \text{and}\quad  v, \widetilde{v} \notin \L(u)\cup \L(\widetilde{u}).
\end{equation}
As the roles of $v$ and $u$ are symmetric,  we have to  discuss the following two cases:

1) $u \in \L(v)$: \\
As $v$ is rich,  $\lps(v)$ is unioccurrent in $v$. Since $\lps(v) = \lpp(u)$, we have that $\lpp(u)$ occurs only as a suffix of $v$.
Since $u \in \L(v)$, necessarily $u = \lpp(u)$ and thus $u$ is a palindrome.
It follows that $u=\lps(u)=\lpp(v)=\lps(v)$.  Richness of $v$ implies that  $\lpp(v)$ and $\lps(v)$ are unioccurrent in $v$ and consequently $v$ is a palindrome satisfying $v=\lpp(v) = u = \widetilde{u}$, which is a contradiction.

2) $\widetilde{u} \in \L(v)$: \\
Since  $\lps(v) = \lps(\widetilde{u}) $ is unioccurrent in $v$, we have that  $\widetilde{u}$ occurs only as  a suffix of  $v$. Similarly,     as  $\lpp(v) = \lpp(\widetilde{u}) $ is unioccurrent in $v$, we get that  $\widetilde{u}$ occurs only as  a prefix  of  $v$. It implies  $v=  \widetilde{u}$, which is again a contradiction.

Obviously,  the assumption \eqref{E2} implies that $u$ and $v$ are not palindromes.
\medskip

To prove the proposition itself (again by contradiction), we assume that  $w$ is rich and let $f$ denote the shortest factor of $w$ such that $f$ contains as its factor $u$ or $\widetilde{u}$  and    $f$ contains as its factor $v$ or $\widetilde{v}$.
Without loss of generality and due to \eqref{E3},  we have to discuss the following two cases:

1) $u$ is a proper prefix and $v$ is a proper suffix of $f$: \\
The word $\lps(f)$ is not longer than $v$; otherwise, we obtain a contradiction with the choice of $f$ as the shortest factor with the given property.   Thus  $\lps(f) = \lps(v)$. Similarly, $\lpp(f) = \lpp(u)$.  It means that $ \lps(f)$ is not unioccurrent in $f$  --- a contradiction.

2) $u$ is a proper prefix and $\widetilde{v}$ is a proper suffix of $f$: \\
By the same argument as before,   $\lps(f) = \lps(\widetilde{v})= \lpp(v)$.
It means that $\lpp(v) = \lps(u)$ occurs as a suffix of $f$ and also as a suffix of $u$. Since $u$ is a proper prefix of $f$,  the factor $\lpp(v) = \lps(f)$ occurs in $f$ twice ---  a contradiction with the richness of $f$.
\end{proof}

\begin{example}  We consider again  the non-rich word $w=11010011$. It contains the factors $u =11010$, $v = 010011$ such that $\lpp(u) =11=\lps(v)$  and   $\lps(u) =010=\lpp(v)$. Also the pairs $u' = 1101001$, $v' = 10011$ and $u'' = 110100$, $v'' = 0011$ satisfy \eqref{E2}.
\end{example}

We show that a pair of factors with the property \eqref{E2} occurs in each  non-rich word.

\begin{proposition}\label{my2} If $w$ be is a non-rich word, then  $w$ has two factors $u$ and $v$ such that
$$ u\neq  \widetilde{v}, \ \ u,v \ \  \text{rich}, \quad    \lps(u)=\lpp(v) \quad \text{ and }\quad  \lps(v)=\lpp(u).
$$
\end{proposition}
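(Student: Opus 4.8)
The plan is to reduce to a minimal witness and then read the pair $u,v$ off its border structure. First I would replace $w$ by a shortest non-rich factor $h$ of $w$, of length $m$; since factors of $h$ are factors of $w$, it suffices to treat $h$. By minimality every \emph{proper} factor of $h$ is rich. Put $r=\lps(h)$; richness fails precisely because $r$ is not unioccurrent in $h$ (the longest palindromic suffix of the prefix $h$ is not unioccurrent, cf.\ Theorem~\ref{thm:equiv_almost_rich}, item~\ref{equiv_arich_psuffix}). Taking the two rightmost occurrences of $r$ yields a complete return word of $r$ that is a suffix of $h$; if it were shorter than $h$ it would be a proper factor whose own longest palindromic suffix $r$ is not unioccurrent, hence non-rich, contradicting minimality. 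Thus $h$ itself is a complete return word of $r$, so $r$ occurs exactly twice in $h$, as a prefix and as a suffix. If $h$ were a palindrome then $\lps(h)=h=r$ could occur only once, so $h$ is not a palindrome; the same counting forces $\lpp(h)=r$, since any palindromic prefix longer than $r$, being a palindrome starting with $r$, would end with $r$ and create a third occurrence.

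Next I would define the candidates. Let $a$ be the last letter of $h$ and set $u=h_0\cdots h_{m-2}$ (the word $h$ with its last letter removed), a proper prefix, hence rich, with $\lpp(u)=r$ because $r$ is its longest palindromic prefix. Put $s=\lps(u)$ and define $v=sa$. Since $s$ is the displayed longest palindromic suffix of $u$, the word $v$ is exactly the suffix of $h$ beginning at that occurrence of $s$. One checks that $u$ is not a palindrome (otherwise $r=u$, and the two occurrences of $r$ would force $h$ to be a constant word, hence a palindrome); therefore $|s|<|u|$ and $v$ is a \emph{proper} suffix of $h$, so $v$ is rich as well. By construction $\lps(u)=s$ and $\lpp(u)=r$, and it remains to verify $\lps(v)=r$, $\lpp(v)=s$ and $u\neq\widetilde v$; these yield exactly the equalities $\lps(u)=\lpp(v)$ and $\lps(v)=\lpp(u)$ required in \eqref{E2}.

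The heart of the argument, and the step I expect to be the main obstacle, is the length bound $|s|\ge|r|-1$, which makes $v=sa$ long enough to end in $r$ and hence forces $\lps(v)=r$ (every palindromic suffix of $v$ is one of $h$, so no longer than $r$). I would prove it from the key richness fact that $\lps(u)$ is unioccurrent in $u$. Assuming $|r|\ge 2$, the palindrome $r$ begins and ends with $a$, write $r=aya$ with $y$ a palindrome; since $r=aya$ is a suffix of $h=ua$, the word $y$ is a palindromic suffix of $u$, so $|s|\ge|y|=|r|-2$, and the only way to miss the bound is $s=y$. But $u$ begins with $r=aya$, so $y$ occurs near the start of $u$, while $s=y$ would also be the suffix of $u$: two distinct occurrences, contradicting unioccurrence of $\lps(u)$. (The cases $|r|\le 1$ and the overlapping case $m=|r|+1$ are immediate, giving $s=r$ or the bound trivially.)

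The same unioccurrence idea settles $\lpp(v)=s$. As $v=sa$ starts with the palindrome $s$ and its only longer prefix is $v$ itself, the only obstruction to $\lpp(v)=s$ is $v$ being a palindrome; then $sa=as$ forces $s=a^{|s|}$ and $v=a^{|s|+1}$, and being a palindromic suffix of $h$ it satisfies $|v|\le|r|$, so with $|s|\ge|r|-1$ we get $v=r=a^{|r|}$ and $s=a^{|r|-1}$. But then $a^{|r|-1}=s=\lps(u)$ appears both inside the prefix $r$ of $u$ and as the suffix of $u$, again contradicting unioccurrence. Finally $u=\widetilde v$ would give $u=\widetilde{sa}=as$ and hence $h=ua=asa=\widetilde h$, a palindrome, which is impossible; thus $u\neq\widetilde v$. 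Collecting the four facts completes the verification of \eqref{E2}.
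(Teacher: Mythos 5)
Your proposal is correct, but it reaches the conclusion by a genuinely different route than the paper. The paper starts from the return-word characterization of richness: it picks a \emph{shortest non-palindromic complete return word} $r$ to a palindrome $p$ in $w$, extracts the longest $q$ with $tq$ a prefix and $\widetilde{q}t$ a suffix of $r$, proves an alternation claim for the occurrences of $q$ and $\widetilde{q}$ inside $t^{-1}rt^{-1}$, and then splits into the cases $q=\varepsilon$ and $q\neq\varepsilon$ to define $u$ and $v$ via a distinguished letter $z$. You instead pass to a \emph{minimal non-rich factor} $h$ of $w$ and use the unioccurrent-lps characterization: minimality forces $h$ to be a complete return word of $r=\lps(h)=\lpp(h)$ with exactly two occurrences of $r$, and then the single uniform construction $u=ha^{-1}$, $v=\lps(u)\,a$ works, the only real work being the length bound $|\lps(u)|\geq |r|-1$, which you correctly derive from unioccurrence of $\lps(u)$ in the rich word $u$ (writing $r=aya$ and excluding $\lps(u)=y$); the verifications of $\lpp(v)=\lps(u)$ (via $sa=as\Rightarrow s\in a^*$) and $u\neq\widetilde{v}$ (else $h=asa$ would be a palindrome) are sound, and the degenerate cases $|r|\leq 1$ and $|u|=|r|$ are indeed vacuous or immediate as you note. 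Your argument is shorter, avoids the alternation claim and the case split, and has the pleasant by-product of locating the witnesses $u,v$ inside any minimal non-rich word (on the paper's example $w=11010011$ it produces the pair $u'=1101001$, $v'=10011$, which the paper lists as valid, rather than the paper's $u=11010$, $v=010011$) --- this connects directly to the open question about minimal non-rich words raised at the end of the paper. What the paper's heavier construction buys, and yours does not, is the extra structural information about $q$: the existence of a \emph{non-palindromic} $q$ with $0q0,1q1,0\widetilde{q}1,1\widetilde{q}0\in\L(w)$, which is exactly what powers the binary corollary following Proposition~\ref{my2}; your construction proves the proposition itself but would not yield that corollary without additional work.
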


\begin{proof}
As $w$ is not rich, it contains a complete  return word $r$ to a palindrome $p$ such that $r$ is not a palindrome.
Let $r$ be the shortest non-palindromic return word in $w$  to a palindrome.   Denote by $t$ the first letter of $r$ and find the longest $q$  such that  $tq$ is  a prefix of $r$ and $\widetilde{q}t$ is a suffix of $v$. Clearly, $p$ is a prefix of $tq$ and $p$ is a suffix  of $\widetilde{q}t$.  Let us denote $x$ and $y$  the letters such that $tqx$ is a prefix of $r$ and $y\widetilde{q}t$. Obviously, $x\neq y$.
\begin{itemize}
\item If $q$ is empty, then $r$ is a non-palindromic complete return word to the letter $t$, i.e., the letter $t$ does not occur in the factor $f$ given by $r = tft$, i.e., $f=t^{-1}rt^{-1}$.     Choose $z \in \{x,y\}$ such that $z\neq t$ and put

$u:=$ the shortest prefix of $r$ which ends with the letter $z$ and

 $v:=$ the shortest suffix of $r$ which starts with the letter $z$.

In particular, both letters  $z$ and $t$ are unioccurrent in $u$ and also in $v$. It means that $\lpp(u)=t=\lps(v)$ and $\lps(u)=z=\lpp(v)$.
One of the words $u$ and $v$ has length 2 and the second one is longer than 2.  It implies that $u\neq \widetilde{v}$.

\item Let us assume that $q\neq \varepsilon$. The word $f= t^{-1}rt^{-1}$  has a prefix $qx$ and a suffix  $y\widetilde{q}$. First we show
 \smallskip

\noindent {\it  Claim}: Occurrences of  $q$  and $\widetilde{q}$ in $f$ alternate and moreover each factor of $f$  starting with $q$ and ending with $\widetilde{q}$ without other occurrences of $q$ and $\widetilde{q}$ is a palindrome.
\smallskip

{\it Proof of the claim}: Let $w'$ be arbitrary  suffix of $f$ such that $|w'| > |\widetilde{q}|$  and  $w'$ has a prefix $q$.  Clearly,  $f$ has a suffix $\widetilde{q}$ and thus  $\widetilde{q}$ is a suffix of  $w'$ as well.  Let us denote $p'=\lpp(q)$.  Since $q$ is rich,     $p'$ is unioccurrent in $q$. But $p'$ occurs in $w'$ at least twice,  as $\widetilde{q}$  is a suffix of $w'$.
Let us denote $r'$ a complete  return word to $p'$ in $w'$.
From minimality of $r$, the complete  return word $r'$ to $p'$ is a palindrome. Therefore, $w'$ has prefixes $p'$,  $q$ and $r'$, their lengths satisfy $|p'|\leq |q| < |r'|$ . It implies that $\widetilde{q}$ is a suffix of the palindrome $r'$ and thus the first occurrence of $q$ in $w'$  is followed by the occurrence of $\widetilde{q}$.

\medskip
Since $f$ is not a palindrome, the previous claim implies that $q$ and $\widetilde{q}$ occur also as inner factors of $f$.
It means that there exists a palindromic factor, say $w''$,  of the word $f$ such that    $\widetilde{q}$ is a prefix and $q$ is a suffix of $w''$  and $|w''| > |q|$.
Let $z$ denote the letter satisfying  that $\widetilde{q}z$  is a prefix  of $w''$. Obviously, $zq$  is a suffix of $w''$.
Let us stress that $z\neq t$, otherwise $r$ would not be a complete return word to the palindrome $p$.
The  letter $z$ enables us to identify the  factors $v$  and $u$ announced in the proposition.   Put

$u:=$ the shortest prefix of $r = tft$ which ends with $\widetilde{q}z$

$v:=$ the shortest suffix of $r = tft$ which starts with $zq$.

To prove  $\lpp(u)=\lps(v)$, we apply the simple observation: If a word $s'$ is a prefix of a word $s$ and $\lpp(s)$ is a prefix of $s'$, then $\lpp(s)=\lpp(s')$.

In our situation:  $p = \lpp(r)=\lpp(u)$. Analogously, $p = \lps(r)=\lps(v)$.

To show  $\lps(u)=\lpp(v)$, we  use a simple consequence of the claim:
Any occurrence of  $\ell q$ in $r$, where $\ell$ is a letter with $\ell \neq t$, is preceded with an occurrence of $\widetilde{q}\ell$. Therefore,  our definition of $u$ guarantees that $\lps(u)$ is not longer than  $\widetilde{q}z$, i.e., $\lps(u) =\lps(\widetilde{q}z)$. By the same reason,
$\lpp(v) =\lpp(zq)$.  As  $\lpp(zq) =  \lps(\widetilde{q}z)$,  the equality $\lps(u)=\lpp(v)$ is proven.

Obviously, $u\neq \widetilde{v}$. Otherwise,  we have a contradiction with the assumption that $tq$ is the longest prefix of $r$ such that $\widetilde{q}t$  is a suffix of $r$. %\qedhere
\end{itemize}
\end{proof}

%%%%%%%%%

The last proof has an interesting direct consequence on a binary alphabet.
It is based on the fact that the case $q = \varepsilon$ is not possible on a binary alphabet and the second case implies that $q$ is not a palindrome.
We state this consequence of the construction in the second case as the following corollary.

\begin{corollary}
Let $w \in \{0,1\}^*$ be a binary word.
The word $w$ is not rich if and only if there exists a non-palindromic word  $q$ such that
\[
0q0, 1q1, 0\widetilde{q}1, 1\widetilde{q}0 \in \L(w).
\]
\end{corollary}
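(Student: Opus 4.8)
The plan is to establish the corollary as a fairly direct distillation of Proposition~\ref{my2} and the constructions appearing in its proof, specialized to the binary alphabet $\A=\{0,1\}$. The statement is an equivalence, so I would prove the two implications separately.

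For the ``only if'' direction, suppose $w$ is not rich. By Proposition~\ref{my2} applied to $w$, there exist rich factors $u,v$ of $w$ with $u\neq\widetilde{v}$, $\lps(u)=\lpp(v)$ and $\lps(v)=\lpp(u)$. The key point is to inspect \emph{which} case of the proof of Proposition~\ref{my2} can actually occur over a binary alphabet. Recall that the proof analyzed the shortest non-palindromic complete return word $r$ in $w$ to a palindrome $p$, wrote $t$ for its first letter, and took the longest $q$ with $tq$ a prefix and $\widetilde{q}t$ a suffix of $r$. The first case of that proof, $q=\varepsilon$, required three distinct letters $t,x,y$ (since $x\neq y$ and we then chose $z\in\{x,y\}$ with $z\neq t$), which is impossible when $|\A|=2$; hence on a binary alphabet we are always in the case $q\neq\varepsilon$. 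I would therefore argue that $q$ is exactly the witness we need. In the binary case the letters satisfy $t\in\{0,1\}$ and $z\neq t$, so $\{t,z\}=\{0,1\}$; the proof's claim shows that $tq$ and $zq$, together with $\widetilde{q}t$ and $\widetilde{q}z$, all occur in $r$ (hence in $w$). After checking the orientations, these occurrences are precisely $tqt$-type and $z\widetilde{q}z$-type patterns, which over $\{0,1\}$ read as $0q0,1q1,0\widetilde{q}1,1\widetilde{q}0\in\L(w)$. Finally, $q$ is non-palindromic: if $q=\widetilde{q}$ then the four factors would collapse and, more to the point, the proof's claim (occurrences of $q$ and $\widetilde{q}$ alternate, and $f$ is not a palindrome) forces $q\neq\widetilde{q}$, which I would spell out.

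For the ``if'' direction, suppose there is a non-palindromic $q$ with $0q0,1q1,0\widetilde{q}1,1\widetilde{q}0\in\L(w)$. The natural strategy is to produce a pair of factors satisfying \eqref{E2} and invoke Proposition~\ref{my}. I would set $u$ and $v$ to be short factors witnessing two of these patterns with the correct $\lpp$/$\lps$ profile: concretely, take $u$ to contain the $\widetilde{q}z$-to-$zq$ transition and $v$ its mirror counterpart, so that $\lps(u)=\lpp(v)$ and $\lps(v)=\lpp(u)$, with $u\neq\widetilde{v}$ guaranteed by non-palindromicity of $q$. Alternatively, and more cleanly, one can argue directly: the presence of both $0q0$ and $1q1$ means $q$ (a palindrome-free word) has two distinct right-and-left extensions inside $w$, and combined with $0\widetilde{q}1,1\widetilde{q}0$ this yields a configuration in which some palindrome has a non-palindromic complete return word in $w$; by Theorem~\ref{thm:equiv_almost_rich}(\ref{equiv_arich_returnpalindromicky}), or rather its finite-word analogue via $\lps$-unioccurrence, $w$ cannot be rich.

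The main obstacle I anticipate is the bookkeeping in the ``only if'' direction: matching the letters $t,z$ and the words $tq,\widetilde{q}t,zq,\widetilde{q}z$ from the proof of Proposition~\ref{my2} to the four symmetric patterns $0q0,1q1,0\widetilde{q}1,1\widetilde{q}0$ requires care about orientation (prefix versus suffix, and which of $q,\widetilde{q}$ sits on which side), and one must verify that all four patterns genuinely appear rather than just two of them. The claim from that proof --- that occurrences of $q$ and $\widetilde{q}$ in the interior $f$ alternate and that $f$ is non-palindromic --- is exactly what supplies both the missing two patterns and the non-palindromicity of $q$, so the crux is to cite and reassemble that claim correctly rather than to prove anything new. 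The ``if'' direction should be the easier half, essentially a reduction to Proposition~\ref{my}.
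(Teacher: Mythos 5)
Your overall route is the paper's route: the authors present this corollary precisely as a distillation of the proof of Proposition~\ref{my2}, resting on two facts they state explicitly --- that the case $q=\varepsilon$ cannot occur on a binary alphabet, and that in the second case $q$ is not a palindrome. So the architecture of your ``only if'' direction is right. However, your justification for excluding the case $q=\varepsilon$ is wrong. You claim that case required ``three distinct letters $t,x,y$''; it does not. It requires only $x\neq y$ and a choice of $z\in\{x,y\}$ with $z\neq t$, and since $x\neq y$ at least one of $x,y$ differs from $t$ even when $|\A|=2$, so the case-1 construction goes through formally on a binary alphabet. The actual reason the case is impossible is different: if $q=\varepsilon$, then $r=tft$ is a complete return word to the letter $t$, so $t$ does not occur in $f$; over $\{0,1\}$ this forces $f=z^k$ for the other letter $z$, whence $r=tz^kt$ is a palindrome, contradicting the choice of $r$ as a \emph{non-palindromic} return word. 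Without this argument your exclusion step fails.

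The bookkeeping you defer in the ``only if'' direction also hides a genuine trap that your sketch resolves incorrectly. Knowing that $tq$, $zq$, $\widetilde{q}t$, $\widetilde{q}z$ occur gives only one-sided extensions, and your claim that the two-sided patterns come out as ``$tqt$-type and $z\widetilde{q}z$-type'' is not even the right target: the corollary needs \emph{equal} letters around $q$ and \emph{unequal} letters around $\widetilde{q}$, and a pattern $z\widetilde{q}z$ is exactly what must not be asserted for the witness. In fact there is a case split on the letter $x$ with $tqx$ a prefix of $r$: since $x\neq y$, either $x=t$, $y=z$ (then the first and last occurrences of $q$ carry profiles $(t,t)$ and $(z,z)$ and the witness is $q$ itself), or $x=z$, $y=t$ (then the two-sided patterns read $tqz$, $zqt$, $t\widetilde{q}t$, $z\widetilde{q}z$, and the corollary's witness must be $\widetilde{q}$, not $q$). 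The second case really occurs: for $w=00110100$, which is a non-palindromic complete return word to $p=00$, the construction yields $t=0$ and $q=01$, yet $0q0=0010\notin\L(w)$; the corollary's witness is $\widetilde{q}=10$. Finally, your ``if'' direction does not close either: the four patterns are only known to occur \emph{somewhere} in $w$, so there is no ``$\widetilde{q}z$-to-$zq$ transition'' factor to point at, and the $\lpp$/$\lps$ profiles of ad hoc factors built from the patterns are uncontrolled, so Proposition~\ref{my} cannot be invoked off the shelf; your alternative (``this yields a configuration in which some palindrome has a non-palindromic complete return word'') is the statement to be proved, not an argument. A workable version would assume $w$ rich, show via the return-word characterization that occurrences of $q$ and $\widetilde{q}$ then alternate with palindromic gaps, and derive from that structure the incompatibility of the four patterns --- but this needs to be carried out, and your proposal does not supply it.
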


\section{Open Questions and Related Problems}

We finish this article with a list of open questions that we deem important in further understanding of the structure of rich words (and more generally, words with finite palindromic defect).

\begin{itemize}
\item The subexponential upper bound on the number of rich words $R_d(n) $   of length $n$ over $d$ letters is based on  the statement that  any rich word of length $n$ can be factorized into at most $c\frac{n}{\ln n}$ distinct palindromes. In fact, the number of palindromes is exaggerated, as the factorization does not take into consideration that each of the palindromes is rich as well.  Any asymptotic improvement of the bound  $c\frac{n}{\ln n}$ would improve the upper bound on $R_d(n) $.
\item  To our knowledge, there are no results on morphisms preserving the set of rich words.
Such a class of morphisms preserving richness would allow to construct a set of class other than the set constructed in \cite{GuShSh15} to obtain a lower bound on $R_2(n)$.
In particular, any fixed point of a primitive  morphism which preserves the set of rich words must be rich as well.
In this point of view the following question is also important.
\item Theorem \ref{th:0dc_main} confirms the validity of the zero defect conjecture only  for marked morphisms $\varphi$ satisfying the following assumption:
all complete return words of all letters in $\buu$ are palindromes or $\varphi$ is not stationary.   We have no example that this peculiar assumption is really needed.
\item  We do not know how to decide whether two rich words $u$ and $v$  are factors of a common rich word $w$. The related task is to identify   a minimal non-rich word, i.e., to look for a word which is not rich but any its proper factor is rich.
\end{itemize}

Primitive morphisms that preserve the set of rich words are included in a larger set of morphisms having infinitely many palindromic factors in their fixed points.
An infinite word having infinitely many palindromic factors is usually called {\em palindromic}.
A very useful property of morphisms in this larger set is given by the following conjecture.

\begin{conjecture}[Class $P$ conjecture \cite{HoKnSi}] \label{co:HKS}
Let $\buu$ be a palindromic fixed point of a primitive morphism $\varphi$.
There exists a morphism of class $P'$ such that its fixed point has the same language as $\buu$.
\end{conjecture}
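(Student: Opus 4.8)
The plan is to approach this as the open problem it is: I lay out the strategy that currently comes nearest to a proof and indicate where it stalls. I use freely that a conjugate of $\varphi$ and any power $\varphi^k$ have a fixed point generating the same language $\Lu$ (recalled in the Preliminaries), so I may replace $\varphi$ by any conjugate or power. A first reduction: since $\varphi$ is primitive, $\buu$ is uniformly recurrent, and since $\buu$ is palindromic it contains arbitrarily long palindromic factors; hence every factor occurs inside some palindrome, so its reversal occurs too, and $\Lu$ is closed under reversal. It is convenient to reformulate the target. A morphism $\psi$ is of class $P$ precisely when some palindrome $p$ is a common prefix of all images and $\psi(a)\,p = p\,\widetilde{\psi(a)}$ for every $a\in\A$ (from $\psi(a)=p\,p_a$ with $p,p_a$ palindromes one computes $\psi(a)\,p = p\,p_a\,p = p\,\widetilde{\psi(a)}$, and conversely such an equation forces each $p_a=\widetilde{p_a}$). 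Class $P'$ asks only that $\psi$ be conjugate to such a morphism, so the goal is to manufacture from the reversal symmetry of $\Lu$ a substitution that generates $\Lu$ and carries a single common palindromic prefix.

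The construction I would attempt proceeds through return words to a palindrome. Assuming $\buu$ aperiodic (the periodic case admits a direct construction), I fix a palindromic factor $w$ long enough that $\varphi$ is recognizable on $\buu$ (Mossé recognizability), and consider the complete return words of $w$. Because $w=\widetilde{w}$ and $\Lu$ is closed under reversal, this set of complete return words is itself closed under reversal: if $f$ has $w$ as both prefix and suffix with exactly two occurrences, then so does $\widetilde{f}$. Coding $\buu$ by return words to $w$ yields a derived word $\bvv$ that is again substitutive and primitive (a fixed point of a substitution $\rho$ obtained from $\varphi$ through recognizability), whose language is closed under reversal and which is again palindromic; its alphabet carries a natural involution $r\mapsto\widetilde{r}$ compatible with the reversal of $\Lu$. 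The heart of the plan is to realize $\rho$, or a conjugate obtained after regrouping letters of $\bvv$, as a class $P$ morphism by centering every image at the palindrome $w$, so that $w$ (or a palindrome built from it) serves as the common prefix $p$ and the residual blocks serve as the $p_a$. The two-letter alphabet is the one case where the rigidity of the structure forces this symmetry outright; this is Tan's theorem confirming the conjecture on binary alphabets, and it guides the general attempt.

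The main obstacle is exactly what separates the general alphabet from the binary one, and it is why the conjecture remains open. Since $\buu$ is only palindromic and not necessarily rich, a complete return word of the palindrome $w$ need not itself be a palindrome (by Theorem~\ref{thm:equiv_almost_rich}, palindromic return words for \emph{every} palindrome would force zero defect), so the residual blocks produced by the centering are in general not palindromes. One would try to repair this by pairing each return word $r$ with its reversal $\widetilde{r}$ and fusing them into a single symmetric image, using the alternation of occurrences of $w$ and the palindromic bispecial factors of $\bvv$ to synchronize the pairing. The deeper difficulty is that class $P$ demands a \emph{single} palindrome $p$ that is simultaneously a common prefix of \emph{all} images; centering one image is easy, but imposing one uniform palindromic prefix across the entire alphabet is a strong global constraint that the return-word data do not automatically provide. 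Achieving both at once, namely eliminating the non-palindromic residuals while retaining a common palindromic prefix, without leaving the conjugacy class and while keeping the generated language equal to $\Lu$, is the step I do not expect to carry through in general, and it is precisely where any honest attempt at the full Class~$P$ conjecture presently stalls.
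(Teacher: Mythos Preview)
The statement you were asked to prove is a \emph{conjecture} in the paper, not a theorem: the paper presents it explicitly as open and says ``At this moment only partial answers to Conjecture~\ref{co:HKS} are known.'' There is no proof in the paper to compare against. You correctly recognized this and, rather than bluffing a proof, outlined a plausible strategy and honestly marked where it breaks down; that is the right response.

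For context, the partial results the paper cites take a different route from your return-word construction. The binary case (Tan) and the marked-morphism case (Labb\'e--Pelantov\'a) both work by analyzing the morphism $\varphi$ itself (or a power of it) and showing it or a conjugate already lies in class~$P'$; the marked hypothesis gives enough rigidity on first and last letters of images to pin down the conjugating word. Your approach instead tries to manufacture a \emph{new} morphism via derived sequences on return words to a long palindrome, which is more flexible and is indeed what Labb\'e's ternary counterexample suggests is necessary (since there the original morphism and all its powers fail to be in class~$P'$), but as you note, synchronizing a single common palindromic prefix across all images is exactly the unresolved global obstruction. Your diagnosis of the obstacle is accurate.
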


The original statement of the conjecture in \cite{HoKnSi} is ambiguous and allows for more interpretations, see also \cite{LaPe14} or \cite{HaVeZa}.
The above given statement of Conjecture~\ref{co:HKS} follows from two results.
First, for binary alphabet the question is solved by B. Tan  in \cite{BoTan}: if a fixed point of a primitive morphism $\varphi$ over a binary alphabet contains infinitely many palindromes, then $\varphi$ or $\varphi^2$ is of class $P'$.
Second, in \cite{La2013}, S. Labb\'e shows that the analogy of the previous result cannot be generalized for  multiliteral alphabet: there exists a word $\bww$ over ternary alphabet which is a palindromic fixed point of a primitive morphism and not being fixed by any morphism of class $P'$.
However, the authors of \cite{HaVeZa} note that the language of the word $\bww$ may indeed be generated by a morphism of class $P$.

At this moment only partial answers to Conjecture~\ref{co:HKS} are known: as already mentioned, the binary case is solved (\cite{BoTan}); for larger alphabets an affirmative answer is provided only for some special classes of morphisms.

In \cite{MaPeSta1}, we confirm the conjecture for morphisms fixing a codings a non-degenerate exchange of $3$ intervals.
In \cite{LaPe14}, the authors prove the validity of the conjecture for marked morphisms.
Moreover, they show that a power of the marked morphism itself is in class $P'$.
The technique and results used in the proofs of the latter fact is crucial in showing the defect conjecture for marked morphisms in \cite{LaPeSta1}.

Palindromicity of a fixed point $\buu$ is linked to the symmetry of the language $\Lu$, namely the closedness under reversal.
One direction of this connection is trivial:
If a fixed point of a primitive morphism contains infinitely many palindromes, then its language is closed under reversal.
The non-trivial converse is shown in \cite{LaPe14} for marked morphisms.
The mentioned results and computer experiments lead to the formulation of the following conjecture.

\begin{conjecture}
Let $\varphi: \A^* \to \A^*$ be a primitive morphism having a fixed point $\buu$.
Its language $\Lu$ is closed under reversal if and only if $\buu$ is palindromic.
\label{co:last}
\end{conjecture}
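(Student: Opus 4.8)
The plan is to prove the two implications separately, since they are of very unequal difficulty. For the implication ``$\buu$ palindromic $\Rightarrow \Lu$ closed under reversal'' (the easy direction already noted in the text) I would argue via uniform recurrence. A fixed point of a primitive morphism is uniformly recurrent, so each factor $w \in \Lu$ occurs inside every sufficiently long factor of $\buu$; since $\buu$ is palindromic it contains palindromes of unbounded length, so we may pick a palindrome $P \in \Lu$ long enough to contain $w$. From $P = \widetilde{P}$ and $w$ being a factor of $P$ it follows that $\widetilde{w}$ is a factor of $\widetilde{P}=P$, whence $\widetilde{w} \in \Lu$. This gives closure under reversal with no further hypotheses.

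The substance of the conjecture is the converse, and I would attack it by contraposition, assuming $\Lu$ closed under reversal while $\buu$ has only finitely many palindromic factors, and seeking a contradiction. It is convenient to reformulate palindromicity dynamically: passing to the two-sided minimal subshift $X \subseteq \A^{\Z}$ generated by $\buu$, closure of $\Lu$ under reversal is equivalent to invariance of $X$ under the reflection $R$ defined by $(Rx)_n = x_{-n}$, so that $X$ carries an action of the infinite dihedral group generated by $R$ and the shift $\sigma$. In this language $\buu$ is palindromic if and only if some reflection $R_k = \sigma^k R$ has a fixed point in $X$: a reflection-fixed point is a bi-infinite palindrome and yields arbitrarily long palindromic factors, while conversely arbitrarily long palindromes (of either parity, hence across the reflections $R_{2j}$ and $R_{2j+1}$) produce such a fixed point by compactness. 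The hard direction thus reduces to showing that an $R$-invariant primitive substitutive subshift must contain a reflection-fixed point.

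The key step, and the one I expect to be the main obstacle, is to combine the substitution structure with the reflection symmetry. Using recognizability of the primitive aperiodic morphism (Moss\'e's theorem), each long factor has an essentially unique parsing into $\varphi$-images of letters, and I would try to show that $R$ carries this canonical parsing to another canonical parsing, thereby inducing a permutation $\pi$ of $\A$ and a fixed word $z$ with $\widetilde{\varphi(a)} = z\,\varphi(\pi(a))\,z^{-1}$ for every $a \in \A$ --- that is, that $\varphi$ is conjugate to a morphism respecting reversal up to $\pi$. Granting such a reversal-compatible morphism (which is morally a morphism of class $P'$), one propagates a short palindrome, say a single letter, through the substitution: applying the compatible morphism and recentring produces longer and longer palindromic factors, exactly as for class $P'$ fixed points, so $\buu$ is palindromic.

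The difficulty is precisely in establishing this reversal-compatible parsing in general. The $\varphi$-decomposition of $\widetilde{w}$ need not be the mirror image of the decomposition of $w$, and Labb\'e's ternary word \cite{La2013} shows that the naive class-$P$ form can genuinely fail, so any argument must allow subtler symmetries obtained only after passing to a power $\varphi^k$, to a conjugate, or to an enlarged alphabet. This is where the proof for marked morphisms \cite{LaPe14}, in which the marking makes both the parsing and the induced permutation $\pi$ canonical, does not extend: the required new input is a way to control the interaction of recognizability with the reflection for arbitrary primitive morphisms. Indeed, producing the reversal-compatible morphism is essentially a relative of the class $P$ statement (Conjecture~\ref{co:HKS}), so I expect the hard direction of Conjecture~\ref{co:last} to be entangled with it, and the main obstacle to lie exactly in severing or circumventing that dependence.
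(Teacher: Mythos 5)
You should first note that the statement you were asked to prove is not a theorem of the paper: it is stated there as Conjecture~\ref{co:last}, open in full generality, with affirmative answers known only in special cases (marked morphisms in \cite{LaPe14}, the binary case via \cite{BoTan}). So there is no ``paper's own proof'' to match, and any complete blind proof would be a genuinely new result. Your treatment of the easy implication is correct and is essentially the standard argument: uniform recurrence of a primitive fixed point plus palindromes of unbounded length forces every factor $w$ to sit inside some long palindrome $P$, whence $\widetilde{w} \in \L(P) \subseteq \Lu$. Your dynamical reformulation of the hard direction --- passing to the two-sided subshift $X$, observing that closure under reversal means invariance under the reflection $R$, and that palindromicity is equivalent (by a compactness/centering argument, splitting into even- and odd-length palindromes) to some reflection $\sigma^k R$ having a fixed point in $X$ --- is sound and is a reasonable framing.

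The genuine gap is exactly where you locate it, and it is fatal to the proposal as a proof: the step producing a permutation $\pi$ of $\A$ and a word $z$ with $\widetilde{\varphi(a)} = z\,\varphi(\pi(a))\,z^{-1}$ for all $a$ is not established, and establishing it is essentially the content of the class $P$ conjecture of \cite{HoKnSi} (Conjecture~\ref{co:HKS} in the paper), itself open beyond binary and marked morphisms. Recognizability (Moss\'e) gives a canonical $\varphi$-parsing of points of $X$, but $R$ need not respect it: the reflection of a $\varphi$-image parsing is a parsing by the words $\widetilde{\varphi(a)}$, which a priori has nothing to do with the $\varphi$-parsing, and realigning the two may require passing to powers $\varphi^k$, conjugates, or derived/enlarged alphabets in ways no one currently controls --- Labb\'e's ternary example \cite{La2013} shows the naive alignment can fail even when the conclusion holds. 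Since you candidly flag this step as unproved rather than arguing it, your text is an honest research plan, not a proof; as a proof attempt it is incomplete precisely at the point where the conjecture is hard, and the paper itself offers nothing stronger (it supports the conjecture by the marked case of \cite{LaPe14} and computer experiments). One further small caution: your reduction also tacitly assumes aperiodicity when invoking recognizability, so the periodic case would need separate (easy, but nonzero) handling.
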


% As stated above, the conjecture is true for marked primitive morphisms.
A proof in full generality of this conjecture has applications in algorithmic analysis of the language of a given morphism.
Specifically, it allows for an efficient test whether the language of a fixed point is closed under reversal.
For marked primitive morphisms, such an algorithm may be devised based on the following results of \cite{LaPe14}:
\begin{enumerate}
    \item Every marked morphism has a so-called well-marked power (see \cite{LaPe14} for a definition). If the fixed point of the morphism is palindromic, then this power is of class $P'$.
    \item Conjecture~\ref{co:last} is true for marked morphisms.
\end{enumerate}
Overall, closedness under reversal of the language generated by a marked primitive morphism is equivalent to palindromicity of the language which is equivalent to the well-marked power being in class $P'$.
Therefore, given a marked primitive morphism, the test whether the language it generates is closed under reversal consists of finding the well-marked power and checking if this power is in class $P'$.
Since both these tasks can be performed efficiently in a straightforward manner, the whole test can be easily executed.

In the view of this special case, Conjecture~\ref{co:last} may be seen as a first step to provide an efficient test of closedness under reversal for the language generated by any primitive morphism for which the class $P$ conjecture holds.

%%%%%%%%%%%%%%%%%%%%%%%%%%%%%%%%%%%%%%%%%%%%%%%%%%%%%%%%%%%%%%%%%%%%%%%%%%
%%%%%%%%%%%%%%%%%%%%%%%%%%%%%%%%%%%%%%%%%%%%%%%%%%%%%%%%%%%%%%%%%%%%%%%%%%

\section*{Acknowledgements}
The authors acknowledge financial support by the Czech Science Foundation grant GA\v CR 13-03538S.

%%%%%%%%%%%%%%%%%%%%%%%%%%%%%%%%%%%%%%%%%%%%%%%%%%%%%%%%%%%%%%%%%%%%%%%%%%%%%
%%%%%%%%%%%%%%%%%%%%%%%%%%%%%%%%%%%%%%%%%%%%%%%%%%%%%%%%%%%%%%%%%%%%%%%%%%%%%
%\begin{thebibliography}{88}
%\addcontentsline{toc}{section}{Bibliography}

%\bibitem{Glen}{A. Glen, J. Justin, S. Widmer, L. Q. Zamboni} {\em Palindromic
%richness}     arXiv:0801.1656v2 [math.CO] 26 pp.

%\end{thebibliography}

%%%%%%%%%%%%%%%%
% Bibliographie %
%%%%%%%%%%%%%%%%%
%\bibliographystyle{plain}
% \bibliographystyle{siam}
\bibliographystyle{splncs03}
\IfFileExists{biblio.bib}{\bibliography{biblio}}{\bibliography{../../../!bibliography/biblio}}

\end{document}